\documentclass[11pt, reqno]{amsart}

\usepackage{amscd,amsmath}
\usepackage{mathrsfs}
\usepackage{amsfonts}
\usepackage{amssymb}
\usepackage{enumerate}
\usepackage{setspace}


\usepackage{color}

\usepackage[hyperindex, colorlinks=true]{hyperref}

\usepackage[margin=1in]{geometry}

\setcounter{page}{1}

\newcommand{\eqn}{\begin{eqnarray}}
\newcommand{\een}{\end{eqnarray}}

\newtheorem{theorem}{Theorem}[section]

\newtheorem{lemma}{Lemma}[section]

\newtheorem{definition}[theorem]{Definition}

\numberwithin{equation}{section}

\begin{document}

\title[Transport equation with nonlocal velocity]{On the local and global existence of solutions to 1D transport equations with nonlocal velocity}

\author{Hantaek Bae}
\address{Department of Mathematical Sciences, Ulsan National Institute of Science and Technology (UNIST), Republic of Korea}
\email{hantaek@unist.ac.kr}

\author{Rafael Granero-Belinch\'{o}n}
\address{Departamento  de  Matem\'aticas,  Estad\'istica  y  Computaci\'on,  Universidad  de Cantabria,  Avda.  Los  Castros  s/n,  Santander,  Spain.}
\email{rafael.granero@unican.es}

\author{Omar Lazar}
\address{Departamento de An\'alisis Matem\'atico \& IMUS Universidad de Sevilla C/ Tarfia s/n, Campus Reina Mercedes, 41012 Sevilla, Spain}
\email{omarlazar@us.es}

\date{\today}

\keywords{Fluid equations, 1D models, Global weak solution}

\subjclass[2010]{35A01, 35D30, 35Q35, 35Q86}

\begin{abstract}
We consider the 1D transport equation with nonlocal velocity field:
\begin{equation*}\label{intro eq}
\begin{split}
&\theta_t+u\theta_x+\nu \Lambda^{\gamma}\theta=0, \\
& u=\mathcal{N}(\theta),
\end{split}
\end{equation*}
where $\mathcal{N}$ is a nonlocal operator. In this paper, we show the existence of solutions of this model locally and globally in time for various types of nonlocal operators.
\end{abstract}

\maketitle

\vspace{-2em}

\section{Introduction} \label{sec:1}
In this paper, we study transport equations with nonlocal velocity. One of the most well-known equation is  the two dimensional Euler equation in vorticity form, 
\[
\omega_{t}+u\cdot \nabla \omega=0,
\]
where the velocity $u$ is recovered from the vorticity $\omega$ through
\[
u=\nabla^\perp(-\Delta)^{-1}\omega \quad \text{or equivalently} \quad  \widehat{u}(\xi)=\frac{i\xi^{\perp}}{|\xi|^{2}}\widehat{\omega}(\xi).
\]
Other nonlocal and quadratically nonlinear equations, such as the surface quasi-geostrophic equation, the incompressible porous medium equation, Stokes equations, magneto-geostrophic equation in multi-dimensions, have  been studied intensively as one can see in  \cite{Bae 2, Bae, Baker, Carrillo, CC, CC2, Chae, De Gregorio, Kiselev, Lazar2, LiRodrigo, LiRodrigo2, Morlet} and references therein.

We here consider the 1D transport equations with nonlocal velocity field of the form 
\begin{subequations}\label{model equation}
\begin{align}
&\theta_t+u\theta_x+\nu \Lambda^{\gamma}\theta=0,\\
& u=\mathcal{N}(\theta),
\end{align}
\end{subequations}
where $\mathcal{N}$ is typically expressed by a Fourier multiplier. The study of (\ref{model equation}) is mainly motivated by \cite{CCF} where C\'ordoba, C\'ordoba, and Fontelos proposed the following 1D model 
\begin{subequations}\label{CCF}
\begin{align}
&\theta_{t}+u\theta_{x}=0, \\
& u=-\mathcal{H}\theta, \quad (\text{$\mathcal{H}$ being the Hilbert transform})
\end{align}
\end{subequations}
for the 2D surface quasi-geostrophic equation and  proved the finite time blow-up of smooth solutions. In this paper, we deal with (\ref{CCF}) and its variations with the following objectives.
\begin{enumerate}[]
\item (1) The existence of weak solution with \emph{rough initial data}. The existence of global-in-time solutions is possible even if strong solutions blow up in finite time, as in the case of the Burgers' equation.  
\item (2) The existence of strong solution when the velocity $u$ is more singular than $\theta$. We intend to see  the competitive relationship between nonlinear terms and viscous terms. 
\end{enumerate}

More specifically, the topics covered in this paper can be summarized as follows.

\vspace{1ex}

\noindent
\textbullet \ {\bf The model 1: $\mathcal{N}=-\mathcal{H}$ and $\nu=0$.} We first show the existence of local-in-time solution in a critical space under the scaling $\theta_{0}(x)\mapsto \theta_{0}(\lambda x)$.  We then introduce the notion of a weak super-solution and obtain a global-in-time weak super-solution with $\theta_{0}\in L^{1}\cap L^{\infty}$ and $\theta_{0}\ge 0$.

\vspace{1ex}

\noindent
\textbullet \ {\bf The model 2: $\mathcal{N}=-\mathcal{H}(\partial_{xx} )^{-\alpha}$, $\alpha>0$, $\nu=1$, and $\gamma>0$.} This is a regularized version of (\ref{CCF}) which is also closely related to many equations as mentioned in \cite{Bae 3}. In this case, we show the existence of weak solutions globally in time under weaker conditions on $\alpha$ and $\gamma$ compared to \cite{Bae 3}.

\vspace{1ex}

\noindent
\textbullet \ {\bf The model 3: $\mathcal{N}=-\mathcal{H}(\partial_{xx} )^{\beta}$, $\beta>0$, $\nu=1$, and $\gamma>0$.} Since $\beta>0$, the velocity field is more singular than the previous two models.  In this case, we show the existence of strong solutions locally in time in two cases: (1) $0<\beta\leq \frac{\gamma}{4}$ when $0<\gamma<2$ and (2) $0<\beta<1$ when $\gamma=2$. We also show the existence of strong solutions for $0<\beta<\frac{1}{2}$ and $\gamma=2$ with rough initial data. We finally show the existence of strong solutions globally in time with $0<\beta<\frac{1}{4}$ and $\gamma=2$.

\vspace{1ex}

We will give detailed statements and proofs of our results in Section 3--5.

\section{Preliminaries}
All constants will be denoted by $C$ that is a generic constant. In a series of inequalities, the value of $C$ can vary with each inequality. We use following notation: for a Banach space $X$,
\[
C_{T}X=C([0,T]:X), \quad L^{p}_{T}X=L^{p}(0,T:X).
\]

The Hilbert transform is defined as 
\[
\mathcal{H}f(x)=\text{p.v.} \int_{\mathbb{R}} \frac{f(y)}{x-y}dy.
\]
We will use the BMO space (see e.g. \cite{Bahouri} for the definition) and its dual which is the Hardy space $\mathcal{H}^1$ which consists of those $f$ such that $f$ and $\mathcal{H}f$ are integrable. We will use the following formula
\[
2 \mathcal{H}(f\mathcal{H}f)=(\mathcal{H}f)^2 - f^2
\]
which implies that $g=f\mathcal{H}f \in \mathcal{H}^1$ and for any $f\in L^{2}$,
\begin{equation} \label{hardy}
\Vert g \Vert_{\mathcal{H}^1}  \leq \Vert f \Vert^{2}_{L^{2}}.
\end{equation}

The differential operator $\Lambda^{\gamma}=(\sqrt{-\Delta})^{\gamma}$ is defined by the action of the following kernels \cite{Cordoba}:
\eqn \label{lambda gamma}
\Lambda^{\gamma} f(x)=c_{\gamma}\text{p.v.} \int_{\mathbb{R}} \frac{f(x)-f(y)}{|x-y|^{1+\gamma}}dy,
\een
where $c_{\gamma}>0$ is a normalized constant. Alternatively, we can define $\Lambda^{\gamma}=(\sqrt{-\Delta})^{\gamma}$ as a Fourier multiplier: $\widehat{\Lambda^{\gamma} f}(\xi)=|\xi|^{\gamma}\widehat{f}(\xi)$. When $\gamma=1$,  $\Lambda f(x)=\mathcal{H}f_{x}(x)$.

\vspace{1ex}

We finally introduce Simon's compactness.

\begin{lemma} \cite{Simon} \label{lem:2.2}
Let $X_{0}$, $X_{1}$, and $X_{2}$ be Banach spaces such that $X_{0}$ is compactly embedded in $X_{1}$ and $X_{1}$ is a subset of $X_{2}$. Then, for $1\leq p<\infty$, the set $\left\{v\in L^{p}_{T}X_{0}: \ \frac{\partial v}{\partial t}\in L^{1}_{T}X_{2}\right\}$ is compactly embedded in $L^{p}_{T}X_{1}$.
\end{lemma}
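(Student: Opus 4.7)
The plan is to apply the standard Fréchet–Kolmogorov-type characterization of compactness in $L^p(0,T;X_1)$ for Banach-space valued functions, according to which a bounded set $\mathcal{F}$ is relatively compact iff
\begin{enumerate}[(i)]
\item $\displaystyle\lim_{h\to 0}\sup_{v\in\mathcal{F}}\|v(\cdot+h)-v(\cdot)\|_{L^p(0,T-h;X_1)}=0$,
\item for every $0\le t_1<t_2\le T$, the set $\Bigl\{\int_{t_1}^{t_2}v(s)\,ds:v\in\mathcal{F}\Bigr\}$ is relatively compact in $X_1$.
\end{enumerate}
My strategy is to verify both conditions for the set appearing in the statement, with the three-space structure mediated by an Ehrling-type interpolation inequality.

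The first ingredient I would establish is precisely that interpolation inequality: for every $\eta>0$ there exists $C_\eta>0$ such that
\[
\|w\|_{X_1}\le \eta\,\|w\|_{X_0}+C_\eta\,\|w\|_{X_2}\qquad\text{for all }w\in X_0.
\]
The proof is by contradiction, exploiting the compact injection $X_0\hookrightarrow\hookrightarrow X_1$ together with the continuous injection $X_1\hookrightarrow X_2$: a sequence $(w_n)\subset X_0$ violating the bound would be bounded in $X_0$ and of unit $X_1$-norm while vanishing in $X_2$; extracting an $X_1$-convergent subsequence would yield a nonzero $X_1$-limit that must also vanish in $X_2$, a contradiction.

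Take $\mathcal{F}$ satisfying a uniform bound $\|v\|_{L^p_T X_0}+\|\partial_t v\|_{L^1_T X_2}\le M$. Condition (ii) follows immediately from $\bigl\|\int_{t_1}^{t_2}v(s)\,ds\bigr\|_{X_0}\le (t_2-t_1)^{1-1/p}M$ and the compact embedding $X_0\hookrightarrow\hookrightarrow X_1$. For (i) I would start from
\[
\|v(t+h)-v(t)\|_{X_2}\le \int_t^{t+h}\|\partial_t v(s)\|_{X_2}\,ds,
\]
apply the Ehrling inequality to $w=v(t+h)-v(t)$, raise to the $p$-th power and integrate in $t\in(0,T-h)$. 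The first term is controlled by $\eta^p (2M)^p$, while the second is bounded by $C_\eta^p\,\omega(h)^p$ where $\omega(h)\to 0$ follows from the absolute continuity of the $L^1$-integral of $\|\partial_t v\|_{X_2}$ (uniformly in $\mathcal{F}$ by Vitali's argument applied to an equi-integrable majorant). Choosing $\eta$ small first and $h$ small afterwards yields the uniform smallness required by (i).

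The main delicate point is the time-translation estimate in (i): one must integrate the Ehrling splitting over $t$ in such a way that the $X_0$ contribution is absorbed against the bound in $L^p_T X_0$ while the $X_2$ contribution is made small uniformly over $\mathcal{F}$, which requires equi-integrability of $\|\partial_t v(\cdot)\|_{X_2}$ rather than a mere $L^1$ bound — a gap that is usually bridged by a standard truncation-and-approximation argument. Once this bookkeeping is carried out, conditions (i) and (ii) combine via the Fréchet–Kolmogorov criterion to deliver the claimed compactness in $L^p_T X_1$.
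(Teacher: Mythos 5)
The paper offers no proof of this lemma; it is quoted verbatim from Simon's article, and Simon's own argument is precisely the route you propose (the Riesz--Fr\'echet--Kolmogorov characterization of relatively compact subsets of $L^p(0,T;X_1)$ combined with an Ehrling-type interpolation inequality exploiting the three-space structure). Your statement of the compactness criterion, your proof of the interpolation inequality, and your verification of condition (ii) are all correct.

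The genuine problem is in how you close condition (i). You bound the $X_2$-contribution by $C_\eta^p\,\omega(h)^p$ and assert that $\omega(h)\to 0$ uniformly over $\mathcal{F}$ by ``absolute continuity of the $L^1$-integral of $\|\partial_t v\|_{X_2}$, uniformly in $\mathcal{F}$ by Vitali's argument applied to an equi-integrable majorant.'' A family that is merely bounded in $L^1_T X_2$ is \emph{not} equi-integrable (take $\partial_t v_n=n\chi_{(0,1/n)}e$ for a fixed $e\in X_2$), there is no equi-integrable majorant, and no truncation-and-approximation argument can manufacture one from the hypothesis $\|\partial_t v\|_{L^1_T X_2}\le M$ alone. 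If your estimate really required $\sup_{v\in\mathcal{F}}\sup_t\int_t^{t+h}\|\partial_t v(s)\|_{X_2}\,ds\to 0$, the proof would fail at this point. Fortunately it does not: setting $g_h(t)=\int_t^{t+h}\|\partial_t v(s)\|_{X_2}\,ds$, one has $g_h(t)\le M$ for every $t$ and, by Fubini, $\int_0^{T-h}g_h(t)\,dt\le hM$, whence
$\int_0^{T-h}\|v(t+h)-v(t)\|_{X_2}^p\,dt\le\int_0^{T-h}g_h(t)^p\,dt\le M^{p-1}\cdot hM=hM^p$
uniformly over $\mathcal{F}$. Since $C_\eta$ is fixed before $h$ is sent to $0$, this crude bound is all that condition (i) requires; with this replacement your argument is complete and coincides with Simon's.
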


\section{The model 1}

We now study (\ref{model equation}) with $\mathcal{N}=-\mathcal{H}$ and $\nu=0$ which is nothing but (\ref{CCF}):
\begin{subequations} \label{eq:1.1}
\begin{align}
& \theta_{t} -\left(\mathcal{H}\theta\right)\theta_{x} =0,  \label{eq:1.1 a}\\
& \theta(0,x)=\theta_{0}(x). \label{eq:1.1 b}
\end{align}
\end{subequations}

\subsection{Local well-posedness}
The local well-posedness of (\ref{eq:1.1}) is established in $H^{2}$ (\cite{Bae}) and  $H^{\frac{3}{2}-\gamma}$ with the viscous term $\Lambda^{\gamma}\theta$ (\cite{HDong}). To improve these results, we first notice that (\ref{eq:1.1}) has the following scaling invariant property: if $\theta(t,x)$ is a solution of (\ref{eq:1.1}), then so is $\theta_{\lambda}(t,x)=\theta(\lambda t, \lambda x)$. So, we take initial data in a space whose norm is closely invariant under the scaling: $\theta_{0}(x)\mapsto \theta_{\lambda 0}(x)=\theta_{0}(\lambda x)$. In this paper, we take the space $\dot{B}^{\frac{3}{2}}_{2,1}$ because there is a constant $C$ such that 
\[
C^{-1} \left\|\theta_{\lambda 0}\right\|_{\dot{B}^{\frac{3}{2}}_{2,1}} \leq \|\theta_{0}\|_{\dot{B}^{\frac{3}{2}}_{2,1}} \leq C \left\|\theta_{\lambda 0}\right\|_{\dot{B}^{\frac{3}{2}}_{2,1}}. 
\]
The mathematical tools needed to prove the local well-posedness of (\ref{eq:1.1}), such as the Littlewood-Paley decomposition and Besov spaces, are provided in the appendix. We also need the following commutator estimate \cite[Lemma 2.100, Remark 2.101]{Bahouri}.

\begin{lemma}[Commutator estimate]\label{commutator lemma 1}
For $f, g \in \mathcal{S}$
\[
\left\|[f,\Delta_{j}]g_{x}\right\|_{L^{2}}\leq Cc_{j}2^{-\frac{3}{2}j} \left\|f_{x}\right\|_{\dot{B}^{\frac{1}{2}}_{2,1}} \left\|g\right\|_{\dot{B}^{\frac{3}{2}}_{2,1}}, \quad \sum^{\infty}_{j=-\infty}c_{j}\leq 1.
\]
\end{lemma}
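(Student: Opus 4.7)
My approach is to combine the integral representation of the commutator with Bony's paraproduct decomposition. Writing $h_j(z) = 2^j h(2^j z)$ for the convolution kernel of the Littlewood--Paley operator $\Delta_j$, one has the basic identity
\[
[f, \Delta_j] g_x (x) = \int_{\mathbb{R}} h_j(x-y) \bigl(f(y) - f(x)\bigr) g_x(y) \, dy.
\]
To extract the full gain $2^{-3j/2}$, I would decompose $f$ as $f = S_{j-1}f + (\mathrm{Id} - S_{j-1})f$ and decompose $g_x = \sum_k \Delta_k g_x$. By frequency support considerations, the bulk of the estimate comes from the \emph{matching} block in which $f$ is replaced by the low-frequency part $S_{j-1} f$ and $\Delta_k g_x$ is restricted to $|k-j| \le N$ for a fixed $N$ (since $\Delta_j (S_{j-1} f \, \Delta_k g_x)$ and $S_{j-1} f \, \Delta_j \Delta_k g_x$ both vanish outside this range).

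On this main piece I would apply the mean-value formula $S_{j-1}f(y) - S_{j-1}f(x) = (y-x)\int_0^1 (S_{j-1}f)'(x + t(y-x))\, dt$, which produces the factor $|y-x|$. Combined with $h_j$ via the rescaling $z = 2^j(x-y)$, this yields an extra $2^{-j}$, and a Young-type inequality controls the $L^2$ norm by $\|(S_{j-1}f)'\|_{L^\infty} \cdot 2^{-j} \cdot \|\Delta_k g_x\|_{L^2}$. Two ingredients then finish the bound. First, the one-dimensional Besov embedding $\dot B^{1/2}_{2,1} \hookrightarrow L^\infty$ gives $\|(S_{j-1}f)'\|_{L^\infty} \le C\|f_x\|_{\dot B^{1/2}_{2,1}}$. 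Second, Bernstein's inequality yields $\|\Delta_k g_x\|_{L^2} \sim 2^k \|\Delta_k g\|_{L^2}$, so on the diagonal $|k-j|\le N$ one gets $2^{-j}\cdot 2^k \|\Delta_k g\|_{L^2} \lesssim 2^{-3j/2}\bigl(2^{3k/2}\|\Delta_k g\|_{L^2}\bigr)$. Summing over this finite band produces a sequence $c_j$ with $\sum_j c_j \le 1$ and the factor $\|g\|_{\dot B^{3/2}_{2,1}}$.

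The remaining contributions --- the high-frequency part $(\mathrm{Id}-S_{j-1})f$ and the ``far'' indices $|k-j| > N$ --- are handled by standard Bony-type estimates on the paraproducts $T_f g_x$, $T_{g_x}f$ and the remainder $R(f,g_x)$. Here one uses Bernstein once more in the form $\|\Delta_{k'} f\|_{L^\infty} \lesssim 2^{k'/2}\|\Delta_{k'} f\|_{L^2}$ (the 1D Sobolev exponent again), together with the almost-orthogonality of the frequency blocks. The main obstacle is not any single estimate but rather the careful bookkeeping of all the Bony pieces and verifying that the accumulated sequences behave as $c_j \ell^1$-sequences --- this is where the third index $1$ in $\dot B^{s}_{2,1}$ (as opposed to $\dot B^{s}_{2,\infty}$) is used essentially, both on the $f$ side and the $g$ side. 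Once this is organized, the estimate reduces to a convolution of two $\ell^1$ sequences weighted by geometric factors, producing the claimed $c_j 2^{-3j/2}$.
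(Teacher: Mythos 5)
Your sketch is correct and follows the standard argument: the paper itself offers no proof of this lemma, citing instead \cite[Lemma 2.100, Remark 2.101]{Bahouri}, and your combination of the kernel representation with the mean-value gain of $2^{-j}$ on the paraproduct term $[T_f,\Delta_j]g_x$, plus Bernstein and the 1D embedding $\dot B^{1/2}_{2,1}\hookrightarrow L^\infty$ for the remaining Bony pieces, is precisely the argument in that reference. No gaps beyond the deliberately deferred bookkeeping of the off-diagonal terms, which works out as you indicate.
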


The first result in this paper the following theorem.

\begin{theorem}\label{Besov theorem}
For any $\theta_{0}\in \dot{B}^{\frac{3}{2}}_{2,1}$, there exists $T=T(\|\theta_0\|)$ such that a unique solution of (\ref{eq:1.1}) exists in  $C_{T}\dot{B}^{\frac{3}{2}}_{2,1}$. 
\end{theorem}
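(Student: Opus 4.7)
The plan is to construct the solution as the limit of a linear iteration scheme. Setting $\theta^{0}(t,x)=\theta_{0}(x)$, I would recursively define $\theta^{n+1}$ as the solution of the linear transport equation
\begin{equation*}
\partial_{t}\theta^{n+1}+u^{n}\partial_{x}\theta^{n+1}=0,\qquad \theta^{n+1}(0,\cdot)=\theta_{0},\qquad u^{n}=-\mathcal{H}\theta^{n}.
\end{equation*}
Existence of each $\theta^{n+1}$ in $C_{T}\dot{B}^{3/2}_{2,1}$ is standard once $u^{n}$ is Lipschitz, which will be guaranteed inductively by the a priori bound below. The core of the argument is to prove that the sequence is uniformly bounded in $L^{\infty}_{T}\dot{B}^{3/2}_{2,1}$ on a time interval $T$ that depends only on $\|\theta_{0}\|_{\dot{B}^{3/2}_{2,1}}$, and Cauchy in a weaker norm.

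For the uniform bound I would apply $\Delta_{j}$ to the equation, decompose $\Delta_{j}(u^{n}\partial_{x}\theta^{n+1})=u^{n}\partial_{x}\Delta_{j}\theta^{n+1}+[\Delta_{j},u^{n}]\partial_{x}\theta^{n+1}$, and test against $\Delta_{j}\theta^{n+1}$. After an integration by parts this yields
\begin{equation*}
\frac{d}{dt}\|\Delta_{j}\theta^{n+1}\|_{L^{2}}\le C\|\partial_{x}u^{n}\|_{L^{\infty}}\|\Delta_{j}\theta^{n+1}\|_{L^{2}}+\|[\Delta_{j},u^{n}]\partial_{x}\theta^{n+1}\|_{L^{2}}.
\end{equation*}
Using the one-dimensional critical embedding $\dot{B}^{1/2}_{2,1}\hookrightarrow L^{\infty}$ and the $\dot{B}^{1/2}_{2,1}$-boundedness of $\mathcal{H}$ gives $\|\partial_{x}u^{n}\|_{L^{\infty}}\lesssim\|\theta^{n}\|_{\dot{B}^{3/2}_{2,1}}$, and Lemma \ref{commutator lemma 1} (applied with $f=u^{n}$, $g=\theta^{n+1}$) controls the commutator by $Cc_{j}2^{-3j/2}\|\theta^{n}\|_{\dot{B}^{3/2}_{2,1}}\|\theta^{n+1}\|_{\dot{B}^{3/2}_{2,1}}$. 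Multiplying by $2^{3j/2}$ and summing in $j$ using $\sum c_{j}\le 1$ produces the quadratic bound
\begin{equation*}
\frac{d}{dt}\|\theta^{n+1}\|_{\dot{B}^{3/2}_{2,1}}\le C\|\theta^{n}\|_{\dot{B}^{3/2}_{2,1}}\|\theta^{n+1}\|_{\dot{B}^{3/2}_{2,1}},
\end{equation*}
from which a standard induction yields a common lifespan $T\sim 1/\|\theta_{0}\|_{\dot{B}^{3/2}_{2,1}}$ and uniform bound $\|\theta^{n}\|_{L^{\infty}_{T}\dot{B}^{3/2}_{2,1}}\le 2\|\theta_{0}\|_{\dot{B}^{3/2}_{2,1}}$.

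Next I would prove contraction at a lower regularity. The difference $\delta^{n}=\theta^{n+1}-\theta^{n}$ satisfies $\partial_{t}\delta^{n}+u^{n}\partial_{x}\delta^{n}=-(u^{n}-u^{n-1})\partial_{x}\theta^{n}$, and an $L^{2}$ energy estimate, combined with $L^{2}$-boundedness of $\mathcal{H}$ and the uniform Lipschitz control of $u^{n}$ inherited from the $\dot{B}^{3/2}_{2,1}$ bound, yields a contraction in $C_{T}L^{2}$ after shrinking $T$ if necessary. The limit $\theta$ inherits the uniform Besov bound by lower semicontinuity, interpolation between $L^{2}$ convergence and the $\dot{B}^{3/2}_{2,1}$ bound upgrades the convergence to any $\dot{B}^{s}_{2,1}$ with $s<3/2$, and passage to the limit in the nonlinearity is then immediate. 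Continuity in time with values in $\dot{B}^{3/2}_{2,1}$ is recovered from the transport structure once $\partial_{t}\theta\in L^{\infty}_{T}\dot{B}^{1/2}_{2,1}$, and uniqueness follows from the same $L^{2}$ difference argument applied to two hypothetical solutions.

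The main obstacle is that $\dot{B}^{3/2}_{2,1}$ is exactly the scaling-critical space for (\ref{eq:1.1}) and the velocity $u=-\mathcal{H}\theta$ has the same regularity as $\theta$, so the transport nonlinearity admits no room for a derivative loss. Closing the a priori estimate therefore hinges entirely on the sharp commutator bound of Lemma \ref{commutator lemma 1}, whose decay factor $2^{-3j/2}$ matches exactly the weight needed to sum against the critical $\dot{B}^{3/2}_{2,1}$ norm; any coarser commutator estimate would produce a logarithmic or derivative loss and destroy the scheme.
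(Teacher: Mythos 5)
Your a priori estimate is exactly the paper's: the same $\Delta_j$ localization, the same commutator decomposition with integration by parts on the transported term, the same use of Lemma \ref{commutator lemma 1} and of the embedding $\dot{B}^{1/2}_{2,1}\hookrightarrow L^{\infty}$ to close a quadratic differential inequality at the critical regularity --- the paper simply runs this directly on the solution and declares the approximation procedure standard, whereas you spell it out as a linear iteration. The one point to watch is your contraction step: for data merely in the homogeneous space $\dot{B}^{3/2}_{2,1}$ the iterates (and the forcing terms in the difference equations) need not lie in $L^{2}$, so the difference estimate is more safely carried out in a space such as $\dot{B}^{1/2}_{2,\infty}$, but this concerns only the routine part that the paper itself omits.
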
 

\begin{proof}
We only provide a priori estimates of $\theta$ in the space stated in Theorem \ref{Besov theorem}.  The other parts, including the approximation procedure, are rather standard. 

We apply $\Delta_{j}$ to (\ref{eq:1.1}), multiply by  $\Delta_{j}\theta$, and integrate the resulting equation over $\mathbb{R}$ to get 
\begin{equation} \label{eq:3.2}
\begin{split}
\frac{1}{2}\frac{d}{dt} \left\|\Delta_{j} \theta \right\|^2_{L ^{2}} &=\int_{\mathbb{R}} \Delta_{j}\left((\mathcal{H}\theta)\theta_{x} \right) \Delta_{j}\theta dx\\
&=\int_{\mathbb{R}} \left((\mathcal{H}\theta)\Delta_{j}\theta_{x} \right) \Delta_{j}\theta dx+ \int_{\mathbb{R}} \left[\Delta_{j}, \mathcal{H}\theta\right]\Delta_{j}\theta_{x} \Delta_{j}\theta dx\\
&=-\frac{1}{2}\int_{\mathbb{R}} (\mathcal{H}\theta)_{x}\left|\Delta_{j}\theta\right|^{2}  dx+ \int_{\mathbb{R}} \left[\Delta_{j}, \mathcal{H}\theta\right]\Delta_{j}\theta_{x} \Delta_{j}\theta dx.
\end{split}
\end{equation}
By the Bernstein inequality, we have
\eqn \label{low frequency}
\left\|\mathcal{H}\theta_{x}\right\|_{L^{\infty}}\leq C\|\theta\|_{\dot{B}^{\frac{3}{2}}_{2,1}}.
\een
We then apply Lemma \ref{commutator lemma 1} to the second term in the right-hand side of (\ref{eq:3.2}) to obtain  
\begin{equation}\label{eq:3.3}
\begin{split}
\int_{\mathbb{R}} \left[\Delta_{j}, \mathcal{H}\theta\right]\Delta_{j}\theta_{x} \Delta_{j}\theta dx\leq Cc_{j}2^{-\frac{3}{2}j}\|\theta\|^{2}_{\dot{B}^{\frac{3}{2}}_{2,1}} \left\|\Delta_{j} \theta \right\|_{L ^{2}}.
\end{split}
\end{equation}
By (\ref{eq:3.2}), (\ref{low frequency}), and (\ref{eq:3.3}), we have 
\[
\frac{d}{dt}\|\theta\|^{2}_{\dot{B}^{\frac{3}{2}}_{2,1}}\leq C\|\theta\|^{3}_{\dot{B}^{\frac{3}{2}}_{2,1}},
\]
from which we deduce
\[
\|\theta(t)\|_{\dot{B}^{\frac{3}{2}}_{2,1}}\leq \frac{\|\theta_{0}\|_{\dot{B}^{\frac{3}{2}}_{2,1}}}{1-Ct\|\theta_{0}\|_{\dot{B}^{\frac{3}{2}}_{2,1}}} \leq 2 \|\theta_{0}\|_{\dot{B}^{\frac{3}{2}}_{2,1}} \quad \text{for all $t\leq T=\frac{1}{2C \|\theta_{0}\|_{\dot{B}^{\frac{3}{2}}_{2,1}}}$}.
\]
This completes the proof.
\end{proof}

\subsection{Global weak super-solution}
We next consider (\ref{eq:1.1}) with rough initial data. More precisely, we assume that $\theta_{0}$ satisfies the following conditions
\eqn \label{initial condition}
\theta_{0}\ge 0, \quad \theta_{0}\in L^{1}\cap L^{\infty}.
\een
Since $\theta$ satisfies the transport equation, we have
\eqn
\theta(t,x)\ge 0, \quad \theta \in L^{\infty}(\mathbb{R}) \quad \text{for all time}.
\een

If we follow the usual weak formulation of (\ref{eq:1.1}), for all $\phi\in C^{\infty}_{c}([0,\infty)\times \mathbb{R})$
\eqn \label{weak solution to 1D sqg}
\int^{T}_{0}\int_{\mathbb{R}} \left[\theta \psi_{t} - \left(\mathcal{H}\theta\right)\theta \psi_{x}+\left(\Lambda \theta\right) \theta \psi \right]  dxdt =\int_{\mathbb{R}}\theta_{0}(x)\psi(x,0)dx.
\een
For $\theta_{0}\ge 0$, there is gain of a half derivative from the structure of the nonlinearity, that is
\eqn \label{half energy dd}
\left\|\theta(t)\right\|_{L^{1}} +\int^{t}_{0}\left\|\Lambda^{\frac{1}{2}}\theta(s)\right\|^{2}_{L^{2}}ds=\left\|\theta_{0}\right\|_{L^{1}}.
\een
So, we can rewrite the left-hand side of (\ref{weak solution to 1D sqg}) as 
\[
\int^{T}_{0}\int_{\mathbb{R}} \left[\theta \psi_{t} - \left(\mathcal{H}\theta\right)\theta \psi_{x}+\Lambda^{\frac{1}{2}}\theta \left[\Lambda^{\frac{1}{2}},\psi\right]\theta +\left|\Lambda^{\frac{1}{2}}\theta\right|^{2}\psi\right]  dxdt =\int_{\mathbb{R}}\theta_{0}(x)\psi(x,0)dx.
\]
However,  the $\dot{H}^{\frac{1}{2}}$ regularity derived from (\ref{half energy dd}) is not enough to pass to the limit in 
\[
\int^{T}_{0}\int_{\mathbb{R}} \left|\Lambda^{\frac{1}{2}}\theta^{\epsilon}\right|^{2}\psi dxdt
\]
from the $\epsilon$-regularized equations  described below. So, we introduce a new notion of solution. Let
\[
\mathcal{A}_{T}=L^{\infty}_{T}\left(L^{1}\cap L^{\infty}\right)\cap L^{2}_{T}H^{\frac{1}{2}}.
\]

\begin{definition}
We say $\theta$ is a weak super-solution of (\ref{eq:1.1}) on the time interval $[0,T]$ if $\theta(t,x)\ge 0$ for all $t\in [0,T]$, $\theta\in \mathcal{A}_{T}$, and for each nonnegative $\psi\in C^{\infty}_{c}([0,T]\times\mathbb{R})$,
\eqn \label{supersolution to 1d sqg}
\int^{T}_{0}\int_{\mathbb{R}} \left[\theta \psi_{t} - \left(\mathcal{H}\theta\right)\theta \psi_{x}+\Lambda^{\frac{1}{2}}\theta \left[\Lambda^{\frac{1}{2}},\psi\right]\theta +\left|\Lambda^{\frac{1}{2}}\theta\right|^{2}\psi\right]  dxdt \ge \int_{\mathbb{R}}\theta_{0}(x)\psi(x,0)dx. 
\een
\end{definition}

To prove Theorem \ref{main theorem}, we need to estimate a commutator term involving $\Lambda^{\frac{1}{2}}$: 
\[
\left[\Lambda^{1/2},\psi\right](f-g)\in L^{6}
\]
which is proved in \cite{Bae 3}.

\begin{lemma} \label{commutator lemma 0}
For $f\in L^{\frac{3}{2}}$, $g\in L^{\frac{3}{2}}$  and $\psi \in W^{1,\infty}$, we have
\[
\left\| \left[\Lambda^{\frac{1}{2}},\psi \right]f- \left[\Lambda^{\frac{1}{2}},\psi \right]g\right\|_{L^{6}}\leq C\|\psi\|_{W^{1,\infty}} \left\|f-g\right\|_{L^{\frac{3}{2}}}.
\]
\end{lemma}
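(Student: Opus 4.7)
The plan is to reduce this commutator bound to a Hardy--Littlewood--Sobolev estimate for the $1$D Riesz potential of order $1/2$. First, I would exploit the linearity of the commutator in its second argument: setting $h = f-g \in L^{3/2}$, the claim becomes the single-function inequality
\[
\left\|\left[\Lambda^{1/2},\psi\right]h\right\|_{L^6} \leq C\|\psi\|_{W^{1,\infty}}\|h\|_{L^{3/2}},
\]
which is what I would actually prove.

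Next, I would use the kernel representation (\ref{lambda gamma}) with $\gamma = 1/2$ to expand both $\Lambda^{1/2}(\psi h)$ and $\psi\,\Lambda^{1/2}h$ as principal-value integrals. Subtracting and cancelling the term proportional to $\psi(x)h(x)$ yields the identity
\[
\left[\Lambda^{1/2},\psi\right]h(x) = c_{1/2}\,\text{p.v.}\int_{\mathbb{R}} \frac{(\psi(x) - \psi(y))\,h(y)}{|x-y|^{3/2}}\,dy.
\]
The point of this identity is that the singular difference $h(x) - h(y)$ has been traded for the smooth difference $\psi(x) - \psi(y)$, which is tame because $\psi$ is Lipschitz.

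The last step would be to invoke the mean value estimate $|\psi(x) - \psi(y)| \leq \|\psi'\|_{L^\infty}|x-y|$, which downgrades the kernel to $|x-y|^{-1/2}$ and gives the pointwise majoration
\[
\left|\left[\Lambda^{1/2},\psi\right]h(x)\right| \leq C\|\psi\|_{W^{1,\infty}}\int_{\mathbb{R}}\frac{|h(y)|}{|x-y|^{1/2}}\,dy = C\|\psi\|_{W^{1,\infty}}\, I_{1/2}|h|(x),
\]
where $I_{1/2}$ is the $1$D Riesz potential of order $1/2$. I would then apply the Hardy--Littlewood--Sobolev inequality in dimension $n=1$ with $\alpha = 1/2$, $p = 3/2$ and $q = 6$, which are admissible because $\tfrac{1}{6} = \tfrac{2}{3} - \tfrac{1/2}{1}$. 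This gives $\|I_{1/2}|h|\|_{L^6} \leq C\|h\|_{L^{3/2}}$ and the lemma follows after restoring $h = f - g$.

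This approach is essentially routine and I do not anticipate a serious obstacle. The only mild delicate point is verifying that the principal-value integral is well-defined (a.e.\ convergent) for $h\in L^{3/2}$ and $\psi\in W^{1,\infty}$; the HLS bound applied to the majorant $I_{1/2}|h|$ supplies this automatically. The exponent pairing $L^{3/2}\to L^6$ in the statement is not a coincidence: it is exactly the Riesz-potential scaling in one dimension that matches the $W^{1,\infty}$ regularity budget spent on $\psi$ to smooth the kernel $|x-y|^{-3/2}$ of $\Lambda^{1/2}$ into the integrable kernel $|x-y|^{-1/2}$.
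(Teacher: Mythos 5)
Your proof is correct: the kernel identity $\left[\Lambda^{1/2},\psi\right]h(x)=c\,\mathrm{p.v.}\int\frac{(\psi(x)-\psi(y))h(y)}{|x-y|^{3/2}}\,dy$, the Lipschitz bound on $\psi$, and the Hardy--Littlewood--Sobolev inequality with $n=1$, $\alpha=\tfrac12$, $p=\tfrac32$, $q=6$ all check out, and this is exactly the argument behind the lemma. Note that the paper itself gives no proof here --- it simply quotes the result from \cite{Bae 3} --- so your write-up is in effect supplying the proof of the cited reference; the only point worth making explicit is that for $h\in L^{3/2}$ alone the commutator should first be defined for smooth compactly supported $h$ and then extended by density using your bound, since $\Lambda^{1/2}h$ by itself need not make pointwise sense at that regularity.
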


The second result in our paper is the following theorem.

\begin{theorem}\label{main theorem}
For any  $\theta_{0}$ satisfying (\ref{initial condition}), there exists a weak super-solution of (\ref{eq:1.1}) in $\mathcal{A}_{T}$.
\end{theorem}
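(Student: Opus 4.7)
The strategy is a vanishing-viscosity approximation. The essential observation is that the identity (\ref{half energy dd}), which displays a gain of a half-derivative for non-negative solutions, provides uniform control of $\theta^\epsilon$ in $L^2_T H^{1/2}$, but the quadratic term $\int |\Lambda^{1/2}\theta|^2 \psi$ appearing in the weak formulation can be controlled in the limit only through weak lower semi-continuity, which is precisely what degrades the expected equality to the super-solution inequality (\ref{supersolution to 1d sqg}).

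First I would consider the regularized problem
\[
\theta^\epsilon_t - (\mathcal{H}\theta^\epsilon)\theta^\epsilon_x + \epsilon \Lambda^2 \theta^\epsilon = 0, \qquad \theta^\epsilon(0,\cdot) = \theta_0^\epsilon = \rho_\epsilon * \theta_0,
\]
where $\rho_\epsilon$ is a standard mollifier preserving $\theta_0^\epsilon \geq 0$. Standard parabolic theory yields a global smooth solution, and the maximum and minimum principles (the viscous term $\epsilon \Lambda^2$ respects positivity and $L^\infty$ bounds) ensure $0 \leq \theta^\epsilon(t,x) \leq \|\theta_0\|_{L^\infty}$. Integrating the equation in $x$ and using $\int (\mathcal{H}\theta^\epsilon)\theta^\epsilon_x \, dx = -\|\Lambda^{1/2}\theta^\epsilon\|_{L^2}^2$ together with $\theta^\epsilon \geq 0$ gives the analog of (\ref{half energy dd}),
\[
\|\theta^\epsilon(t)\|_{L^1} + \int_0^t \|\Lambda^{1/2}\theta^\epsilon(s)\|_{L^2}^2 \, ds = \|\theta_0^\epsilon\|_{L^1},
\]
placing $\{\theta^\epsilon\}$ uniformly in $\mathcal{A}_T$. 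Rewriting the nonlinearity as $(\mathcal{H}\theta^\epsilon)\theta^\epsilon_x = \partial_x[(\mathcal{H}\theta^\epsilon)\theta^\epsilon] - (\Lambda\theta^\epsilon)\theta^\epsilon$ makes $\partial_t \theta^\epsilon$ uniformly bounded in a sufficiently negative Sobolev space; Lemma \ref{lem:2.2} applied with $X_0 = H^{1/2}_{\mathrm{loc}}$, $X_1 = L^{3/2}_{\mathrm{loc}}$ and $X_2 = H^{-N}$ then produces, for a subsequence, strong convergence $\theta^\epsilon \to \theta$ in $L^2_T L^{3/2}_{\mathrm{loc}}$, together with weak convergence $\Lambda^{1/2}\theta^\epsilon \rightharpoonup \Lambda^{1/2}\theta$ in $L^2_T L^2$ and weak-$*$ convergence $\theta^\epsilon \rightharpoonup \theta$ in $L^\infty_T L^\infty$.

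The final step is to pass to the limit in the weak formulation (\ref{supersolution to 1d sqg}) written as an equality for $\theta^\epsilon$ (modulo an $O(\epsilon)$ viscous contribution $\epsilon \int \theta^\epsilon \Lambda^2 \psi$). The terms $\int \theta^\epsilon \psi_t$ and $\int (\mathcal{H}\theta^\epsilon)\theta^\epsilon \psi_x$ converge by strong $L^2_{\mathrm{loc}}$ convergence of $\theta^\epsilon$ combined with the $L^2$-continuity of $\mathcal{H}$, while for the commutator term $\int \Lambda^{1/2}\theta^\epsilon [\Lambda^{1/2},\psi]\theta^\epsilon$ I would split $[\Lambda^{1/2},\psi]\theta^\epsilon = [\Lambda^{1/2},\psi]\theta + [\Lambda^{1/2},\psi](\theta^\epsilon - \theta)$, applying Lemma \ref{commutator lemma 0} (with $g=\theta$) to get $\|[\Lambda^{1/2},\psi](\theta^\epsilon - \theta)\|_{L^6} \to 0$ and using weak-strong pairing for both resulting pieces. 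The main obstacle, and the reason one only obtains a super-solution, is the quadratic dissipation term $\int |\Lambda^{1/2}\theta^\epsilon|^2 \psi$: strong convergence of $\Lambda^{1/2}\theta^\epsilon$ in $L^2$ is not available from the estimates, so this square cannot be identified in the limit, but since $\psi \geq 0$, weak lower semi-continuity of the $L^2$-norm yields
\[
\int |\Lambda^{1/2}\theta|^2 \psi \leq \liminf_{\epsilon \to 0} \int |\Lambda^{1/2}\theta^\epsilon|^2 \psi,
\]
and this one-sided estimate is exactly what converts the limiting equality into the desired super-solution inequality (\ref{supersolution to 1d sqg}). The conditions $\theta \geq 0$ and $\theta \in \mathcal{A}_T$ are inherited from the uniform bounds together with the strong and weak limits.
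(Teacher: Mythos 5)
Your proposal follows essentially the same route as the paper: mollify the data, add $\epsilon\partial_{xx}$, use positivity to obtain the $L^{1}$--$\dot H^{\frac12}$ energy identity and the maximum principle for the uniform $\mathcal{A}_{T}$ bound, rewrite the nonlinearity as $\partial_x\left(\mathcal{H}\theta^{\epsilon}\,\theta^{\epsilon}\right)-\theta^{\epsilon}\Lambda\theta^{\epsilon}$ to control $\partial_t\theta^{\epsilon}$ in a negative Sobolev space, apply Simon's compactness (Lemma \ref{lem:2.2}), and pass to the limit using Lemma \ref{commutator lemma 0} for the commutator term and a one-sided limit for the quadratic dissipation term. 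The only noteworthy (and minor) difference is that you justify that last step by weak lower semicontinuity of the weighted $L^{2}$ norm under the weak convergence $\Lambda^{\frac12}\theta^{\epsilon}\rightharpoonup\Lambda^{\frac12}\theta$, whereas the paper invokes Fatou's lemma; since no pointwise convergence of $\Lambda^{\frac12}\theta^{\epsilon}$ is established, your justification is the cleaner one.
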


\begin{proof}
We first regularize initial data as $\theta^{\epsilon}_{0}=\rho_{\epsilon}\ast \theta_{0}$ where $\rho_{\epsilon}$ is a standard mollifier that preserve the positivity of the regularized initial data. We then regularize the equation by introducing the Laplacian term with a coefficient $\epsilon>0$, namely
\eqn \label{regularized equation}
\theta^{\epsilon}_t -\mathcal{H}\theta^{\epsilon}\theta^{\epsilon}_x =\epsilon \theta^{\epsilon}_{xx}.
\een
For the proof of the existence of a global-in-time smooth solution we refer to \cite{Lazar}. Moreover, $\theta^{\epsilon}$ satisfies that $\theta^{\epsilon}\ge 0 $ and 
\[
\left\| \theta^{\epsilon}(t) \right\|_{L^{1}}+\left\| \theta^{\epsilon}(t) \right\|_{L^{\infty}}+ \int^{t}_{0}\left\|\Lambda^{\frac{1}{2}}\theta^{\epsilon}(s)\right\|^{2}_{L^{2}}ds \leq  \left\|\theta_{0}\right\|_{L^{1}} + \left\|\theta_{0}\right\|_{L^{\infty}}.
\]
Therefore, $(\theta_{\epsilon})$ is bounded in $\mathcal{A}_{T}$ uniformly in $\epsilon>0$. 

From this, we have uniform bounds 
\[
\mathcal{H}\theta^{\epsilon}\in L^{4}_{T}L^{2}, \quad \theta^{\epsilon}\in L^{2}_{T}L^{2}, \quad \left(\left(\mathcal{H}\theta^{\epsilon}\right)\theta^{\epsilon}\right)_x\in L^{\frac{4}{3}}_{T}H^{-2}, \quad \epsilon \theta^{\epsilon}_{xx}\in L^{2}_{T}H^{-2}.
\]
Moreover,  for any $\phi \in H^{2}$,
\[
\int_{\mathbb{R}}\left|\theta^{\epsilon} \Lambda\theta^{\epsilon} \phi \right|  dx \leq \left\|\Lambda^{\frac{1}{2}}\theta^{\epsilon}\right\|^{2}_{L^{2}} \left\|\phi\right\|_{L^{\infty}} + \left\|\Lambda^{\frac{1}{2}}\theta^{\epsilon}\right\|_{L^{2}} \left\|\theta^{\epsilon}\right\|_{L^{\infty}} \left\|\Lambda^{\frac{1}{2}}\phi\right\|_{L^{\infty}} 
\]
which implies that 
\[
\theta^{\epsilon} \Lambda\theta^{\epsilon} \in L^{1}_{T}H^{-2}.
\]

Combining all together, we obtain 
\[
\theta^{\epsilon}_{t}=\mathcal{H}\theta^{\epsilon}\theta^{\epsilon}_x +\epsilon \theta^{\epsilon}_{xx}= \left(\mathcal{H}\theta^{\epsilon}\theta^{\epsilon}\right)_x -\theta^{\epsilon} \Lambda\theta^{\epsilon}+\epsilon \theta^{\epsilon}_{xx}\in L^{1}_{T}H^{-2}.
\]

To pass to the limit into the weak super-solution formulation, we extract a subsequence of $\left(\theta^{\epsilon}\right)$, using the same index $\epsilon$ for simplicity, and a function $\theta \in \mathcal{A}_T$ such that 
\begin{equation}\label{convergence of approximate solutions}
\begin{split}
& \theta^{\epsilon} \stackrel{\star}{\rightharpoonup} \theta \quad \text{in} \quad L^{\infty}_{T}\left(L^{p}\cap H^{\frac{1}{2}}\right) \quad \text{for all $p\in (1,\infty)$},\\
& \theta^{\epsilon} \rightharpoonup \theta \quad \text{in} \quad L^{2}_{T}H^{\frac{1}{2}},\\
& \theta^{\epsilon} \rightarrow  \theta \quad \text{in $L^{2}_{T}L^{p}$ for all $1<p<\infty$} ,
\end{split}
\end{equation}
where we use Lemma \ref{lem:2.2} for the strong convergence with  
\[
X_0=L^2_{T}H^{\frac{1}{2}},\quad X_{1}=L^2_{T}L^p,\quad X_2=L^1_{T}H^{-2}.
\]

We now multiply (\ref{regularized equation}) by a test function $\psi\in \mathcal{C}^{\infty}_{c}\left([0,T)\times\mathbb{R}\right)$ and integrate over $\mathbb{R}$. Then,
\begin{equation} \label{weak form with commutator 111}
\begin{split}
&\int^{T}_{0}\int \Big[\theta^{\epsilon} \psi_{t} -  \underbrace{\left(\mathcal{H}\theta^{\epsilon}\right)\theta^{\epsilon}\psi_{x}}_{\text{I}}+ \epsilon \theta^{\epsilon}\psi_{xx}\Big]  dxdt - \int \theta^{\epsilon}_{0}(x)\psi(0,x)dx\\
& = -\int^{T}_{0}\int \underbrace{\Lambda^{\frac{1}{2}}\theta^{\epsilon} \left[\Lambda^{\frac{1}{2}},\psi \right]\theta^{\epsilon}}_{\text{II}}  dxdt -\int^{T}_{0}\int \underbrace{\left|\Lambda^{\frac{1}{2}}\theta^{\epsilon}\right|^{2}\psi}_{\text{III}}  dxdt.
\end{split}
\end{equation}

We note that we are able to rearrange terms in the usual weak formulation into (\ref{weak form with commutator 111}) since $\theta^{\epsilon}$ is smooth.  By the strong convergence in (\ref{convergence of approximate solutions}), we can pass to the limit to $\text{I}$. Moreover, since
\[
\left[\Lambda^{\frac{1}{2}},\psi \right]\theta^{\epsilon} \rightarrow \left[\Lambda^{\frac{1}{2}},\psi \right]\theta
\]
strongly in $L^{2}_{T}L^{6}$ by Lemma \ref{commutator lemma 0} and the strong convergence in (\ref{convergence of approximate solutions}), we can pass to the limit to $\text{II}$. Lastly, by Fatou's lemma, 
\[
\lim_{\epsilon\rightarrow 0}\int^{T}_{0}\int \left|\Lambda^{\frac{1}{2}}\theta^{\epsilon}\right|^{2}\psi dxdt \ge \int^{T}_{0}\int \left|\Lambda^{\frac{1}{2}}\theta\right|^{2}\psi dxdt.
\]
Combining all the limits together, we obtain that 
\begin{equation} \label{weak form with commutator 2}
\begin{split}
\int^{T}_{0}\int_{\mathbb{R}} \left[\theta \psi_{t} - \left(\mathcal{H}\theta\right)\theta \psi_{x}+\Lambda^{\frac{1}{2}}\theta \left[\Lambda^{\frac{1}{2}},\psi\right]\theta +\left|\Lambda^{\frac{1}{2}}\theta\right|^{2}\psi\right]  dxdt \ge \int_{\mathbb{R}}\theta_{0}(x)\psi(x,0)dx. 
\end{split}
\end{equation}
This completes the proof. 
\end{proof}

\section{The model 2}

We now consider the following equation:
\begin{subequations} \label{new model}
\begin{align}
& \theta_{t} -\left(\mathcal{H}(\partial_{xx})^{-\alpha}\theta\right)\theta_{x} +\Lambda^{\gamma}\theta=0, \\
& \theta(0,x)=\theta_{0}(x),
\end{align}
\end{subequations}
where $\alpha,\gamma>0$. In this case, we focus on the existence of weak solutions under some conditions  of $(\alpha,\gamma)$. As before, we assume that $\theta_{0}$ satisfies the following conditions
\eqn \label{initial condition 2}
\theta_{0}\ge 0, \quad \theta_{0}\in L^{1}\cap L^{\infty}.
\een

Let
\[
\mathcal{B}_{T}=L^{\infty}_{T}\left(L^{1}\cap L^{\infty}\right)\cap L^{2}_{T}H^{\frac{\gamma}{2}}.
\]

\begin{definition} 
We say $\theta$ is a weak solution of (\ref{new model}) on the time interval $[0,T]$ if $\theta(t,x)\ge 0$ for all $t\in [0,T]$, $\theta\in \mathcal{B}_{T}$, and for each $\psi\in C^{\infty}_{c}([0,T]\times\mathbb{R})$,
\[
\int^{T}_{0}\int_{\mathbb{R}} \left[\theta \psi_{t} - \left(\mathcal{H}(\partial_{xx})^{-\alpha}\theta\right)\theta \psi_{x}-\Lambda^{1-\frac{\gamma}{2}} (\partial_{xx})^{-\alpha}\theta \Lambda^{\frac{\gamma}{2}} (\theta\psi) -\theta \Lambda^{\gamma}\psi\right]  dxdt = \int_{\mathbb{R}}\theta_{0}(x)\psi(x,0)dx. 
\]
\end{definition}

The third result in the paper is the following.

\begin{theorem}\label{new model weak}
Suppose that two positive numbers $\alpha$ and $\gamma$ satisfy
\eqn \label{range of parameters}
0<\gamma<1, \quad \alpha\ge \frac{1}{2}-\frac{\gamma}{2}.
\een
Then, for any  $\theta_{0}$ satisfying (\ref{initial condition 2}), there exists a weak solution of (\ref{new model}) in $\mathcal{B}_{T}$ for all $T>0$.
\end{theorem}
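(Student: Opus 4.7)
My plan is to adapt the vanishing-viscosity scheme of Theorem~\ref{main theorem} to this dispersive regularized setting. Set $\theta_0^\epsilon = \rho_\epsilon\ast\theta_0\ge 0$ and consider the parabolic approximation
\[
\theta_t^\epsilon - \bigl(\mathcal{H}(\partial_{xx})^{-\alpha}\theta^\epsilon\bigr)\theta_x^\epsilon + \Lambda^\gamma\theta^\epsilon = \epsilon\theta_{xx}^\epsilon,
\]
which admits a global smooth nonnegative solution by arguments as in \cite{Lazar}. Positivity $\theta^\epsilon\ge 0$ and the bound $\|\theta^\epsilon(t)\|_{L^\infty}\le\|\theta_0\|_{L^\infty}$ follow from the parabolic maximum principle together with the sign properties of $\Lambda^\gamma\theta$ at extrema read off from the kernel representation \eqref{lambda gamma}.

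For the uniform $\mathcal{B}_T$ bound I would first integrate the equation in $x$: since $(\mathcal{H}(\partial_{xx})^{-\alpha}\theta^\epsilon)_x=\Lambda^{1-2\alpha}\theta^\epsilon$ and $\int_\mathbb{R} \Lambda^\gamma\theta^\epsilon\,dx=0$, one obtains the dissipation identity
\[
\|\theta^\epsilon(t)\|_{L^1} + \int_0^t \|\Lambda^{(1-2\alpha)/2}\theta^\epsilon\|_{L^2}^2\,ds = \|\theta_0^\epsilon\|_{L^1}.
\]
Testing the equation against $\theta^\epsilon$ and controlling the resulting trilinear remainder $\tfrac12\int\Lambda^{1-2\alpha}\theta^\epsilon (\theta^\epsilon)^2\,dx$ by splitting derivatives symmetrically via Plancherel (permissible because $\alpha\ge(1-\gamma)/2$ forces $(1-2\alpha)/2\le\gamma/2$) together with the uniform $L^1\cap L^\infty$ bounds yields a uniform bound for $(\theta^\epsilon)$ in $\mathcal{B}_T$. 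Reading off a time-derivative bound $\theta_t^\epsilon\in L^r_T H^{-s}$ ($r\ge 1$, $s>0$) from the equation and invoking Simon's lemma (Lemma~\ref{lem:2.2}) then furnishes, along a subsequence, a limit $\theta\in\mathcal{B}_T$ with
\[
\theta^\epsilon\stackrel{\ast}{\rightharpoonup}\theta \text{ in } L^\infty_T L^p,\qquad \theta^\epsilon\rightharpoonup\theta \text{ in } L^2_T H^{\gamma/2},\qquad \theta^\epsilon\to\theta \text{ in } L^2_T L^p\;(1<p<\infty).
\]

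Passing to the limit in the weak formulation, the linear terms $\theta^\epsilon\psi_t$ and $\theta^\epsilon\Lambda^\gamma\psi$ are immediate from weak/weak-$\ast$ convergence. For the transport term $(\mathcal{H}(\partial_{xx})^{-\alpha}\theta^\epsilon)\theta^\epsilon\psi_x$, the $2\alpha$-smoothing of the nonlocal velocity, combined with the compact support of $\psi_x$ and Rellich's theorem, upgrades weak convergence to strong convergence of $\mathcal{H}(\partial_{xx})^{-\alpha}\theta^\epsilon$ on $\mathrm{supp}\,\psi_x$, which pairs with the strong $L^2_TL^p$ convergence of $\theta^\epsilon$.

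The \textbf{main obstacle} is the bilinear dissipation-drift term
\[
\int_0^T\!\!\int_\mathbb{R} \Lambda^{1-\gamma/2-2\alpha}\theta^\epsilon\cdot \Lambda^{\gamma/2}(\theta^\epsilon\psi)\,dx\,dt,
\]
because at the threshold $\alpha=(1-\gamma)/2$ both factors sit only weakly in $L^2_TL^2$. My approach is to split $\Lambda^{\gamma/2}(\theta^\epsilon\psi)=\psi\Lambda^{\gamma/2}\theta^\epsilon+[\Lambda^{\gamma/2},\psi]\theta^\epsilon$; the commutator is of strictly lower order (by a Calder\'on-type estimate in the spirit of Lemma~\ref{commutator lemma 0}) and passes via the strong $L^2_TL^p$ convergence of $\theta^\epsilon$. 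For the principal piece, whenever $\alpha>(1-\gamma)/2$ strictly the sub-critical order $1-\gamma/2-2\alpha<\gamma/2$ combined with the time-derivative bound delivers strong convergence of $\Lambda^{1-\gamma/2-2\alpha}\theta^\epsilon$ in $L^2_TL^2$ via Aubin-Lions, and the pairing passes by weak$\times$strong. In the boundary case $\alpha=(1-\gamma)/2$, the Plancherel rearrangement $\int \Lambda^{1-\gamma/2-2\alpha}\theta\cdot \Lambda^{\gamma/2}(\theta\psi)\,dx=\int \Lambda^{1-2\alpha}\theta\cdot\theta\psi\,dx$ is exploited together with the uniform $L^\infty$ bound to produce higher integrability of $\theta^\epsilon\psi$ beyond what $H^{\gamma/2}$ alone provides, which restores the compactness needed to identify the limit.
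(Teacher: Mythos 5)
Your proposal follows essentially the same route as the paper: the same $\epsilon$-viscosity regularization, the $L^1$ dissipation identity coming from the Hilbert-transform structure of the velocity (which is exactly what permits the weaker condition $\alpha \ge \tfrac{1}{2}-\tfrac{\gamma}{2}$), an $L^2$ energy estimate closed by feeding that dissipation back in, Simon/Aubin--Lions compactness, and passage to the limit in the weak formulation built around the term $\int \Lambda^{1-\gamma/2-2\alpha}\theta\,\Lambda^{\gamma/2}(\theta\psi)\,dx$. The one place you diverge --- attempting the endpoint $\alpha=\tfrac{1}{2}-\tfrac{\gamma}{2}$ via a ``Plancherel rearrangement,'' which as sketched does not actually restore the missing compactness --- is a case the paper itself quietly abandons in its closing remark by restricting to the strict inequality $\alpha>\tfrac{1}{2}-\tfrac{\gamma}{2}$, so your treatment is on the same footing as (and more candid than) the published argument.
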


\begin{proof}
As in the proof of Theorem \ref{main theorem}, we regularize $\theta_{0}$ and the equation as
\eqn \label{regularized equation new model}
\theta^{\epsilon}_{0}=\rho_{\epsilon}\ast \theta_{0}, \quad \theta^{\epsilon}_t -\left(\mathcal{H}(\partial_{xx})^{-\alpha}\theta^{\epsilon}\right)\theta^{\epsilon}_{x} +\Lambda^{\gamma}\theta^{\epsilon} =\epsilon \theta^{\epsilon}_{xx}.
\een
Then, the corresponding $\theta^{\epsilon}$ satisfies 
\eqn \label{sign and max}
\theta^{\epsilon}(t,x)\ge 0, \quad \left\|\theta^{\epsilon}(t)\right\|_{L^{\infty}}\leq  \|\theta_{0}\|_{L^{\infty}} \quad \text{for all time }
\een
and 
\eqn \label{L1 bound new} 
\left\|\theta^{\epsilon}(t)\right\|_{L^{1}} +\int^{t}_{0}\left\|\Lambda^{\frac{1}{2}}(\partial_{xx})^{-\frac{\alpha}{2}}\theta^{\epsilon}(s)\right\|^{2}_{L^{2}}ds\leq \left\|\theta_{0}\right\|_{L^{1}}.
\een
We next multiply (\ref{regularized equation new model}) by $\theta^{\epsilon}$ and integrate over $\mathbb{R}$. Then,  
\begin{equation*}
\begin{split}
&\frac{1}{2}\frac{d}{dt}\left\|\theta^{\epsilon}(t)\right\|^{2}_{L^{2}}+\left\|\Lambda^{\frac{\gamma}{2}}\theta^{\epsilon}(t)\right\|^{2}_{L^{2}} +\epsilon \left\|\theta^{\epsilon}_{x}\right\|^{2}_{L^{2}}=-\frac{1}{2}\int_{\mathbb{R}}\left\{\Lambda (\partial_{xx})^{-\alpha}\theta^{\epsilon}(t)\right\} (\theta^{\epsilon}(t))^{2}dx\\
&=-\frac{1}{2}\int_{\mathbb{R}}\left\{\Lambda^{1-\frac{\gamma}{2}} (\partial_{xx})^{-\alpha}\theta^{\epsilon}(t)\right\} \Lambda^{\frac{\gamma}{2}}(\theta^{\epsilon}(t))^{2}dx\\
& \leq C\left\|\Lambda^{1-\frac{\gamma}{2}} (\partial_{xx})^{-\alpha}\theta^{\epsilon}(t)\right\|_{L^{2}} \left\|\Lambda^{\frac{\gamma}{2}}\theta^{\epsilon}(t)\right\|_{L^{2}}\|\theta^{\epsilon}(t)\|_{L^{\infty}} \\
& \leq \frac{1}{2} \left\|\Lambda^{\frac{\gamma}{2}}\theta^{\epsilon}(t)\right\|^{2}_{L^{2}} + C\left\|\Lambda^{1-\frac{\gamma}{2}} (\partial_{xx})^{-\alpha}\theta^{\epsilon}(t)\right\|^{2}_{L^{2}}\|\theta^{\epsilon}(t)\|^{2}_{L^{\infty}}.
\end{split}
\end{equation*}
By (\ref{range of parameters}), (\ref{sign and max}) and (\ref{L1 bound new}), we obtain 
\eqn \label{new L2 bound}
\|\theta^{\epsilon}(t)\|^{2}_{L^{2}}+\int^{t}_{0}\left\|\Lambda^{\frac{\gamma}{2}}\theta^{\epsilon}(s)\right\|^{2}_{L^{2}}ds +\epsilon \int^{t}_{0}\left\|\theta^{\epsilon}_{x}(s)\right\|^{2}_{L^{2}}ds \leq C\|\theta_{0}\|^{2}_{L^{1}} \|\theta_{0}\|^{2}_{L^{\infty}}.
\een
Therefore, $(\theta_{\epsilon})$ is bounded in $\mathcal{B}_{T}$ uniformly in $\epsilon>0$. 

From this, we have uniform bounds 
\[
\left\{\left(\mathcal{H}(\partial_{xx})^{-\alpha}\theta\right)\theta\right\}_{x} \in L^{2}_{T}L^{2}, \quad \Lambda^{\gamma}\theta^{\epsilon}+\epsilon \theta^{\epsilon}_{xx}\in L^{2}_{T}H^{-2}.
\]
Moreover, the condition (\ref{range of parameters}) implies that 
\[
\left(\Lambda(\partial_{xx})^{-\alpha}\theta^{\epsilon}\right)\theta^{\epsilon}  \in L^{1}_{T}H^{-1}.
\]

Combining all together, we also derive that 
\[
\theta^{\epsilon}_{t}\in L^{1}_{T}H^{-2}.
\]

We now multiply (\ref{regularized equation new model}) by a test function $\psi\in \mathcal{C}^{\infty}_{c}\left([0,T)\times\mathbb{R}\right)$ and integrate over $\mathbb{R}$. Then,
\begin{equation} \label{weak form with commutator}
\begin{split}
&\int^{T}_{0}\int \Big[\theta^{\epsilon} \psi_{t} -  \underbrace{\left(\mathcal{H}(\partial_{xx})^{-\alpha}\theta^{\epsilon}\right)\theta^{\epsilon}\psi_{x}}_{\text{I}}+ \Lambda^{\gamma}\theta^{\epsilon}+\epsilon \theta^{\epsilon}\psi_{xx}\Big]  dxdt - \int \theta^{\epsilon}_{0}(x)\psi(0,x)dx\\
& = \int^{T}_{0}\int \underbrace{\Lambda^{1-\frac{\gamma}{2}}\mathcal{H}(\partial_{xx})^{-\alpha}\theta^{\epsilon} \Lambda^{\frac{\gamma}{2}}(\theta^{\epsilon}\psi)}_{\text{II}}  dxdt.
\end{split}
\end{equation}

To pass the limit to this formulation, we extract a subsequence of $\left(\theta^{\epsilon}\right)$, using the same index $\epsilon$ for simplicity, and a function $\theta \in \mathcal{B}_T$ such that 
\begin{equation}\label{convergence of approximate solutions 2}
\begin{split}
& \theta^{\epsilon} \stackrel{\star}{\rightharpoonup} \theta \quad \text{in} \quad L^{\infty}_{T}\left(L^{p}\cap H^{\frac{1}{2}}\right) \quad \text{for all $p\in (1,\infty)$},\\
& \theta^{\epsilon} \rightharpoonup \theta \quad \text{in} \quad L^{2}_{T}H^{\frac{\gamma}{2}},\\
& \theta^{\epsilon} \rightarrow  \theta \quad \text{in $L^{2}_{T}H^{1-\frac{\gamma}{2}-2\alpha}\cap L^{2}_{T}L^{p}$ for all $1<p<\infty$} ,
\end{split}
\end{equation}
where we use Lemma \ref{lem:2.2} for the strong convergence with the condition (\ref{range of parameters}) and 
\[
X_0=L^2_{T}H^{\frac{\gamma}{2}},\quad X_{1}=L^{2}_{T}H^{1-\frac{\gamma}{2}-2\alpha} \cap L^2_{T}L^p,\quad X_2=L^1_{T}H^{-2}.
\]
By the strong convergence in (\ref{convergence of approximate solutions 2}), we can pass to the limit to $\text{I}$ and $\text{II}$ in (\ref{weak form with commutator}). Therefore, we obtain  
\[\int^{T}_{0}\int_{\mathbb{R}} \left[\theta \psi_{t} - \left(\mathcal{H}(\partial_{xx})^{-\alpha}\theta\right)\theta \psi_{x}-\Lambda^{1-\frac{\gamma}{2}} (\partial_{xx})^{-\alpha}\theta \Lambda^{\frac{\gamma}{2}} (\theta\psi) -\theta \Lambda^{\gamma}\psi\right]  dxdt = \int_{\mathbb{R}}\theta_{0}(x)\psi(x,0)dx. 
\]
This completes the proof of Theorem \ref{new model weak}.
\end{proof}

\noindent
{\bf Remark.} Theorem \ref{new model weak} improves Theorem 1.4 in \cite{Bae 3}, where $(\alpha, \gamma)$ is assumed to satisfy $\alpha\ge \frac{1}{2}-\frac{\gamma}{4}$. The main idea of taking weaker  regularization in (\ref{new model}) is that the Hilbert transform in front of $(1-\partial_{xx})^{-\alpha}$ gives (\ref{L1 bound new}) which makes to obtain (\ref{new L2 bound}). We choose $\alpha> \frac{1}{2}-\frac{\gamma}{2}$ instead of $ \alpha\ge \frac{1}{2}-\frac{\gamma}{2}$ to apply  compactness argument when we pass to the limit to $\epsilon$-regularized equations. 

\section{The model 3}

In this section, we consider the following equation 
\begin{subequations} \label{singular model d}
\begin{align}
& \theta_{t} -\left(\mathcal{H}(\partial_{xx})^{\beta}\theta\right)\theta_{x} +\Lambda^{\gamma}\theta=0, \\
& \theta(0,x)=\theta_{0}(x)
\end{align}
\end{subequations}
where $\beta,\gamma>0$. Depending on the range of $\beta$ and $\gamma$, we will have four different results.

\subsection{Local well-posedness}
We begin with the local well-posedness result.

\begin{theorem}  \label{LW and blow-up criterion theorem}
Let $0<\gamma<2$ and $0<\beta\leq \frac{\gamma}{4}$. For $\theta_0 \in H^2 (\Bbb R)$ there exists $T=T(\|\theta_0\|_{H^2})$ such that a unique solution of (\ref{singular model d}) exists in $C\left([0, T); H^2 (\Bbb R)\right)$. Moreover, we have the following blow-up criterion:
\eqn \label{blow-up criterion 1}
\lim\sup_{t\nearrow T^*} \|\theta(t)\|_{H^2}=\infty \quad \text{if and only if} \quad \int_0 ^{T^*} \Big(\left\|u_x(s)\right\|_{L^{\infty}}+\left\|\theta_x(s)\right\|_{L^{\infty}}\Big)ds=\infty.
\een
\end{theorem}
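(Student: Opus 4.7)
The plan is a standard parabolic regularization and energy method, adapted to the balance between the singular velocity $u=\mathcal{H}(\partial_{xx})^{\beta}\theta$ and the dissipation $\Lambda^{\gamma}\theta$. First I would set up an approximation: mollify the initial data and add a vanishing viscosity $\varepsilon\theta^{\varepsilon}_{xx}$, or perform a standard Galerkin truncation, to obtain a sequence of smooth solutions on a common time interval. The construction of $\theta^{\varepsilon}$ is routine; the content is in the uniform-in-$\varepsilon$ $H^{2}$ a priori estimate, which I outline next.

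Apply $\Lambda^{2}$ to (\ref{singular model d}), pair with $\Lambda^{2}\theta$ in $L^{2}$, and use the identity
\[
\int \Lambda^{2}(u\theta_{x})\,\Lambda^{2}\theta\,dx=-\tfrac{1}{2}\int u_{x}(\Lambda^{2}\theta)^{2}dx+\int [\Lambda^{2},u]\theta_{x}\,\Lambda^{2}\theta\,dx
\]
together with the Kato--Ponce commutator estimate
\[
\|[\Lambda^{2},u]\theta_{x}\|_{L^{2}}\leq C\bigl(\|u_{x}\|_{L^{\infty}}\|\Lambda^{2}\theta\|_{L^{2}}+\|\Lambda^{2}u\|_{L^{2}}\|\theta_{x}\|_{L^{\infty}}\bigr).
\]
This produces
\[
\tfrac{1}{2}\tfrac{d}{dt}\|\theta\|_{H^{2}}^{2}+\|\Lambda^{\gamma/2}\theta\|_{H^{2}}^{2}\leq C\bigl(\|u_{x}\|_{L^{\infty}}+\|\theta_{x}\|_{L^{\infty}}\bigr)\|\theta\|_{H^{2}}^{2}+\text{lower order}.
\]
Since $u_{x}$ is of order $2\beta+1$ in $\theta$ and $\Lambda^{2}u$ is of order $2\beta$ in $\Lambda^{2}\theta$, the hypothesis $2\beta\leq\gamma/2$ lets me interpolate between $H^{2}$ and $H^{2+\gamma/2}$: in one dimension $\|\Lambda^{2\beta+1}\theta\|_{L^{\infty}}\lesssim\|\theta\|_{H^{2\beta+3/2+\eta}}$ for small $\eta>0$, and the condition $\beta\leq\gamma/4$ ensures $2\beta+3/2+\eta<2+\gamma/2$, so this Sobolev norm is strictly below the regularity produced by dissipation. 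Young's inequality then absorbs all top-order terms into $\|\Lambda^{2+\gamma/2}\theta\|_{L^{2}}^{2}$, leaving a bound $\tfrac{d}{dt}\|\theta\|_{H^{2}}^{2}\leq C\|\theta\|_{H^{2}}^{p}$ for some $p>2$, which yields a $T=T(\|\theta_{0}\|_{H^{2}})>0$ independent of $\varepsilon$. Passing to the limit via Lemma \ref{lem:2.2} gives existence of a solution in $C([0,T);H^{2})$.

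Uniqueness is handled by an $L^{2}$ estimate on $\delta=\theta_{1}-\theta_{2}$: the equation
\[
\delta_{t}-u_{1}\delta_{x}-\bigl(\mathcal{H}(\partial_{xx})^{\beta}\delta\bigr)\theta_{2,x}+\Lambda^{\gamma}\delta=0
\]
gives, after integration by parts,
\[
\tfrac{1}{2}\tfrac{d}{dt}\|\delta\|_{L^{2}}^{2}+\|\Lambda^{\gamma/2}\delta\|_{L^{2}}^{2}\leq \tfrac{1}{2}\|u_{1,x}\|_{L^{\infty}}\|\delta\|_{L^{2}}^{2}+\|\Lambda^{2\beta}\delta\|_{L^{2}}\|\theta_{2,x}\|_{L^{\infty}}\|\delta\|_{L^{2}},
\]
and since $2\beta\leq\gamma/2$ the interpolation $\|\Lambda^{2\beta}\delta\|_{L^{2}}^{2}\lesssim\|\delta\|_{L^{2}}^{2-4\beta/\gamma}\|\Lambda^{\gamma/2}\delta\|_{L^{2}}^{4\beta/\gamma}$ and Young's inequality let me absorb the dissipation and close with Grönwall.

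For the blow-up criterion (\ref{blow-up criterion 1}), I would return to the $H^{2}$ estimate but refrain from using interpolation to absorb into the dissipation; instead keep $(\|u_{x}\|_{L^{\infty}}+\|\theta_{x}\|_{L^{\infty}})\|\theta\|_{H^{2}}^{2}$ on the right and drop the positive dissipative term. Grönwall yields
\[
\|\theta(t)\|_{H^{2}}^{2}\leq\|\theta_{0}\|_{H^{2}}^{2}\exp\!\left(C\int_{0}^{t}\bigl(\|u_{x}(s)\|_{L^{\infty}}+\|\theta_{x}(s)\|_{L^{\infty}}\bigr)ds\right),
\]
so if the integral on the right is finite at $T^{*}$ then $\|\theta\|_{H^{2}}$ stays bounded, contradicting the hypothesis that $T^{*}$ is a maximal existence time; the converse direction is immediate from $H^{1}(\mathbb{R})\hookrightarrow L^{\infty}$. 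The main obstacle is the first step: justifying that the singular factor $\|u_{x}\|_{L^{\infty}}\sim\|\Lambda^{2\beta+1}\theta\|_{L^{\infty}}$ can be interpolated strictly below the dissipation level $H^{2+\gamma/2}$, which is precisely what the sharp restriction $\beta\leq\gamma/4$ buys and which pins down the critical relationship between the order of the nonlocal velocity and the strength of the fractional dissipation.
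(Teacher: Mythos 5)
Your proposal follows essentially the same route as the paper: an $H^2$ energy estimate combined with the Kato--Ponce commutator bound, integration by parts on the transport part, absorption of the order-$(2+2\beta)$ velocity contribution into the $\Lambda^{\gamma/2}$ dissipation using $\beta\le\frac{\gamma}{4}$, an $L^2$ contraction estimate for uniqueness, and Gr\"onwall for the blow-up criterion. The one caveat is that in the blow-up criterion you cannot literally ``drop the dissipative term'' and keep only $\bigl(\|u_x\|_{L^\infty}+\|\theta_x\|_{L^\infty}\bigr)\|\theta\|_{H^2}^2$, because the commutator produces $\|u\|_{H^2}\|\theta_x\|_{L^\infty}\|\theta\|_{H^2}$ with $\|u\|_{H^2}\sim\|\theta\|_{H^{2+2\beta}}$ not controlled by $\|\theta\|_{H^2}$; as in the paper, you must retain the dissipation to absorb this term via Young's inequality, which leaves $\|\theta_x\|_{L^\infty}^2$ rather than $\|\theta_x\|_{L^\infty}$ in the Gr\"onwall factor (an imprecision the paper's own proof shares).
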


\begin{proof}
Let $u=-\mathcal{H}(\partial_{xx})^{\beta}\theta$. Operating $\partial_x^l$ on (\ref{singular model d}), taking its $L^2$ inner product with $\partial_x^l \theta$, and summing over $l=0, 1,2$, 
\begin{equation} \label{pr1}
\begin{split}
&\frac12 \frac{d}{dt} \|\theta(t)\|_{H^2}^2 + \left\|\Lambda^{\frac{\gamma}{2}}\theta\right\|_{H^2}^2= -\sum_{l=0} ^2 \int \partial_x^l (u \theta_x) \partial_x^l \theta dx \\
 &=-\sum_{l=0} ^2 \int \left( \partial_x^l (u \theta_x)- u  \partial_x^l \theta_x \right) \partial_x^l \theta dx-\sum_{l=0} ^2 \int u  \partial_x^l \theta_x  \partial_x^l \theta    dx = \text{I}_1+ \text{I}_2.
\end{split}
\end{equation}
Using the commutator estimate in \cite{Kato}
\[
\sum_{|l|\leq 2} \left\|D^l (fg)-fD^l g\right\|_{L^2} \leq   C\left(\|\nabla f\|_{L^\infty} \left\|D g\right\|_{L^2} +\left\|D^2 f\right\|_{L^2}   \|g\|_{L^\infty} \right),
\]
we have
\begin{equation}\label{bound of I1}
\begin{split}
 \text{I}_1 \leq \sum_{l=0} ^2 \left\|\partial_x^l (u \theta_x)- u  \partial_x^l   \theta_x\right\|_{L^2} \|\theta\|_{H^2} &\leq C\left(\|u_x\|_{L^\infty} \|\theta\|_{H^2} +   \|u\|_{H^2}\|\theta_x\|_{L^\infty} \right)\|\theta\|_{H^2} \\
   &\leq C_{\kappa}   \left(\|u_x\|_{L^\infty}+\|\theta_x\|^{2}_{L^\infty}\right)\|\theta\|_{H^2}^2 +  \kappa\|u\|^{2}_{H^2}.
\end{split}
\end{equation}
And by integration by parts,
\eqn \label{bound of I2}
\text{I}_2 = -\frac{1}{2}\sum_{l=0} ^2\int u \partial_x \left|\partial_x^l \theta\right|^2 dx= \frac{1}{2}\sum_{l=0} ^2\int u_x \left|\partial_x^l \theta \right|^2 dx \leq  C \|u_x\|_{L^\infty} \|\theta\|_{H^2}^2.
\een
Since $\beta\leq \frac{\gamma}{4}$, for a sufficiently small $\kappa>0$
\[
\kappa\|u\|^{2}_{H^2}\leq \frac{1}{2}\left\|\Lambda^{\frac{\gamma}{2}}\theta\right\|_{H^2}^2.
\]
By (\ref{bound of I1}) and (\ref{bound of I2}), we obtain 
\eqn \label{Hm bound of w}
\frac{d}{dt} \|\theta\|_{H^2}^2 +\left\|\Lambda^{\frac{\gamma}{2}}\theta\right\|_{H^2}^2 \leq C \left(\|u_x\|_{L^\infty}+\|\theta_x\|^{2}_{L^\infty}\right)\|\theta\|_{H^2}^2 \leq C\|\theta\|_{H^2}^3+ C\|\theta\|_{H^2}^4, \quad \beta\leq \frac{\gamma}{4}
\een
from which we deduce that there is $T=T(\|\theta_{0}\|_{H^{2}})$ such that 
\[
\|\theta(t)\|_{H^2}\leq  2 \|\theta_0\|_{H^2} \quad \text{for all} \ t<T.
\]
(\ref{Hm bound of w}) also implies (\ref{blow-up criterion 1}). 
 
To show the uniqueness, let $\theta_{1}$ and $\theta_{2}$ be two solutions of (\ref{singular model d}), and let $\theta=\theta_{1}-\theta_{2}$ and $u=u_{1}-u_{2}$. Then, $(\theta,u)$ satisfies the following equations
\[
\theta_t+u_{1}\theta_x-u\theta_{2x}=-\Lambda^{\gamma}\theta, \quad u=-\mathcal{H}(\partial_{xx} )^{\beta} \theta, \quad \theta(0,x)=0.
\]
By taking the $L^{2}$ product of the equation with $\theta$,  
\[
\frac{d}{dt}\|\theta\|^{2}_{L^{2}}+2\left\|\Lambda^{\frac{\gamma}{2}}\theta\right\|^{2}_{L^{2}} \leq C \left(\left\|u_{1x}\right\|_{L^{\infty}} +\left\|\theta_{2x}\right\|_{L^{\infty}}\right)\|\theta\|^{2}_{L^{2}}\leq C \left(\left\|\Lambda^{\frac{\gamma}{2}}\theta_{1}\right\|_{H^{2}} +\left\|\theta_{2}\right\|_{H^{2}}\right)\|\theta\|^{2}_{L^{2}}.
\]
So, $\theta=0$ in $L^{2}$ and thus a solution  is unique. This completes the proof of Theorem \ref{LW and blow-up criterion theorem}.
\end{proof}

Theorem \ref{LW and blow-up criterion theorem} provides a local existence result for $\beta \nearrow \frac{1}{2}$ as $\gamma\nearrow 2$. But, we can increase the range of $\beta$ when we deal with (\ref{singular model d}) directly with $\gamma=2$ because we can do the integration by parts.

\begin{theorem}  \label{LW and blow-up criterion theorem 2}
Let $\gamma=2$ and  $0<\beta<1$. For $\theta_0 \in H^2 (\Bbb R)$ there exists $T=T(\|\theta_0\|_{H^2})$ such that a unique solution of (\ref{singular model d}) exists in $C\left([0, T); H^2 (\Bbb R)\right)$.
\end{theorem}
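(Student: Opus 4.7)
The plan is to follow the energy-method blueprint of Theorem \ref{LW and blow-up criterion theorem}, but to exploit the fact that with $\gamma=2$ the viscous term is the \emph{local} operator $-\theta_{xx}$. This lets me integrate by parts freely instead of passing through the Kato--Ponce commutator, whose remainder $\|u\|_{H^2}\|\theta_x\|_{L^\infty}\lesssim \|\theta\|_{H^{2+2\beta}}\|\theta\|_{H^2}$ was the source of the restriction $\beta\leq \gamma/4=\tfrac12$ in Theorem \ref{LW and blow-up criterion theorem}. I begin with a viscous regularization as in the proof of Theorem \ref{main theorem}, apply $\partial_x^l$ to \eqref{singular model d} for $l=0,1,2$, pair with $\partial_x^l\theta$, and sum to obtain
\begin{equation*}
\tfrac{1}{2}\frac{d}{dt}\|\theta\|_{H^2}^2+\|\theta_x\|_{H^2}^2=\sum_{l=0}^{2}\int_{\mathbb{R}}\partial_x^l(u\theta_x)\,\partial_x^l\theta\,dx.
\end{equation*}

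For each $l$ I would expand the integrand by the Leibniz rule and integrate by parts on every term that carries a derivative on $\partial_x^l\theta$. The ``transport piece'' $\int u\,\partial_x^{l+1}\theta\,\partial_x^l\theta\,dx=-\tfrac12\int u_x|\partial_x^l\theta|^2\,dx$ depends only on $\|u_x\|_{L^\infty}$. For the top level $l=2$, using $\theta_x\theta_{xx}=\tfrac12(\theta_x^2)_x$ to integrate by parts once more, the remaining contributions collapse to
\begin{equation*}
\tfrac{3}{2}\int u_x\theta_{xx}^2\,dx-\tfrac{1}{2}\int u_{xxx}\theta_x^2\,dx,
\end{equation*}
so that no full $H^2$-norm of $u$ ever enters the estimate.

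Using the Fourier identifications $u_x=-\Lambda^{2\beta+1}\theta$ and $u_{xxx}=\Lambda^{2\beta+3}\theta$, the one-dimensional Sobolev embedding gives $\|u_x\|_{L^\infty}\lesssim\|\theta\|_{H^{2\beta+3/2+\epsilon}}$, while Hölder's inequality combined with a fractional Leibniz bound on $\theta_x^2$ controls $\int u_{xxx}\theta_x^2\,dx$ by a product involving Sobolev norms of $\theta$ of orders at most $2+2\beta$ together with $\|\theta_x\|_{L^\infty}$. Each intermediate Sobolev norm is then interpolated against the parabolic dissipation $\|\theta\|_{L^2_T H^{3}}$ as $\|\theta\|_{H^2}^{1-\lambda}\|\theta\|_{H^3}^{\lambda}$, and Young's inequality absorbs the $\|\theta\|_{H^3}$-factor into $\|\theta_x\|_{H^2}^2$ with an arbitrarily small constant. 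A careful exponent count shows that the net power of $\|\theta\|_{H^3}$ remains strictly below $2$ for $\beta<1$, so the estimate closes as
\begin{equation*}
\frac{d}{dt}\|\theta\|_{H^2}^2+\|\theta_x\|_{H^2}^2\leq C\,\mathcal F(\|\theta\|_{H^2}),
\end{equation*}
and a standard ODE argument delivers a time $T=T(\|\theta_0\|_{H^2})>0$ on which the $H^2$ norm stays bounded.

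The a priori bound is upgraded to existence in $C([0,T);H^2)$ by passing to the limit in the regularized problem through Lemma \ref{lem:2.2}, and uniqueness follows from the $L^2$-difference estimate at the end of the proof of Theorem \ref{LW and blow-up criterion theorem}, with $\|u_{1,x}\|_{L^\infty}+\|\theta_{2,x}\|_{L^\infty}$ controlled by $\|\theta_1\|_{H^2}+\|\theta_2\|_{H^2}$ via Sobolev, followed by Gr\"onwall. The main obstacle I anticipate is the exponent book-keeping for $\beta$ close to $1$: when $2\beta>1$, the Sobolev norm $\|\theta\|_{\dot H^{2+2\beta}}$ sits above the available $L^2_T\dot H^3$ dissipation, so closing the estimate in that regime requires a more delicate splitting --- trading $L^\infty$-norms for $L^p$-norms with $p$ large and invoking time-integrated Gagliardo--Nirenberg inequalities --- so that every power of $\|\theta\|_{H^3}$ appearing on the right-hand side remains strictly below $2$.
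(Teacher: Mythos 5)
Your overall strategy (energy estimates in $H^2$, exploiting that $\gamma=2$ makes the dissipation local so one can integrate by parts instead of invoking Kato--Ponce) is the same as the paper's, but the way you distribute derivatives creates a gap exactly in the range $\tfrac12<\beta<1$, which is the only new content of this theorem (Theorem \ref{LW and blow-up criterion theorem} already reaches $\beta\leq\gamma/4\to\tfrac12$). After your integrations by parts you are left with $\tfrac32\int u_x\theta_{xx}^2\,dx-\tfrac12\int u_{xxx}\theta_x^2\,dx$, and you propose to control these via $\|u_x\|_{L^\infty}\lesssim\|\theta\|_{H^{2\beta+3/2+\epsilon}}$ and a fractional Leibniz bound producing $\|\theta\|_{\dot H^{2+2\beta}}$. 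Since $u_{xxx}$ has order $2\beta+3$ and $u_x$ in $L^\infty$ costs order $2\beta+3/2+\epsilon$, these norms exceed the available dissipation level $\dot H^3$ as soon as $\beta>\tfrac12$ (resp.\ $\beta>\tfrac34$), so they cannot be interpolated between $H^2$ and $H^3$ and no choice of Young exponents closes the estimate. You flag this yourself, but the proposed remedy (large-$p$ H\"older and time-integrated Gagliardo--Nirenberg) is not a proof; without it the argument only reproduces what Theorem \ref{LW and blow-up criterion theorem} already gives.

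The paper's proof avoids the obstruction by never letting more than one derivative fall on the nonlocal factor. Writing the $\dot H^2$ estimate as
\begin{equation*}
\tfrac12\tfrac{d}{dt}\|\theta\|_{\dot H^2}^2+\|\theta\|_{\dot H^3}^2=-\int\bigl\{\mathcal{H}(\partial_{xx})^{\beta}\theta_x\bigr\}\theta_x\theta_{xxx}\,dx+\tfrac12\int\bigl\{\mathcal{H}(\partial_{xx})^{\beta}\theta_x\bigr\}\theta_{xx}^2\,dx,
\end{equation*}
the nonlocal factor is $\Lambda^{2\beta+1}\theta$ (up to a Hilbert transform), of order $2\beta+1<3$, and it is measured in $L^2$ and interpolated as $\|\Lambda^{2\beta+1}\theta\|_{L^2}\leq\|\theta_x\|_{L^2}^{1-\beta}\|\theta_{xxx}\|_{L^2}^{\beta}$; the remaining factors are placed in $\|\theta_x\|_{L^\infty}$, $\|\theta_{xxx}\|_{L^2}$ for the first term and $\|\theta_{xx}\|_{L^4}^2\leq C\|\theta_{xx}\|_{L^2}^{3/2}\|\theta_{xxx}\|_{L^2}^{1/2}$ for the second. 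The total powers of $\|\theta_{xxx}\|_{L^2}$ are then $1+\beta$ and $\beta+\tfrac12$, both strictly below $2$ for all $\beta<1$, so Young's inequality absorbs them into the dissipation. Your decomposition is in fact equivalent to the paper's after undoing one integration by parts (note $-\tfrac12\int u_{xxx}\theta_x^2=-\int u_x\theta_{xx}^2-\int u_x\theta_x\theta_{xxx}$), so the fix is to stop integrating by parts one step earlier and to put the $L^2$ norm on the nonlocal factor rather than the $L^\infty$ norm; as written, your argument does not establish the claimed range $0<\beta<1$.
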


\begin{proof}
We begin the $L^{2}$ bound:
\[
\frac{1}{2}\frac{d}{dt}\left\|\theta\right\|^{2}_{L^{2}}+ \left\|\theta_{x}\right\|^{2}_{L^{2}}\leq \|\theta\|_{L^{\infty}} \left\|\mathcal{H}(\partial_{xx})^{\beta}\theta\right\|_{L^{2}} \left\|\theta_{x}\right\|_{L^{2}}\leq C\|\theta\|^{3}_{H^{2}}.
\]
We next estimate $\theta_{xx}$.  Indeed, after several integration parts, we have 
\[
\frac{1}{2}\frac{d}{dt}\left\|\theta\right\|^{2}_{\dot H^{2}} + \Vert \theta \Vert^{2}_{\dot H^{3}}= -\int \left\{\mathcal{H}(\partial_{xx})^{\beta}\theta_{x}\right\}\theta_{x}\theta_{xxx}dx+\frac{1}{2}\int \left\{\mathcal{H}(\partial_{xx})^{\beta}\theta_{x}\right\}\theta_{xx}\theta_{xx}dx =\text{I}_1+\text{I}_2.
\]
When $0<\beta<1$,
\begin{equation*}
\begin{split}
\left|\text{I}_{1}\right|& \leq \left\|\theta_{x}\right\|_{L^{\infty}}\left\|\mathcal{H}(\partial_{xx} )^{\beta} \theta_{x}\right\|_{L^{2}}\left\|\theta_{xxx}\right\|_{L^{2}} =\left\|\theta_{x}\right\|_{L^{\infty}}\left\|\Lambda^{2\beta+1}\theta \right\|_{L^{2}}\left\|\theta_{xxx}\right\|_{L^{2}}\\
& \leq C\left\|\theta\right\|_{H^{2}} \left\|\theta_{x}\right\|^{1-\beta}_{L^{2}}\left\|\theta_{xxx}\right\|^{1+\beta}_{L^{2}} \leq C\left\|\theta\right\|^{4}_{H^{2}} + C\left\|\theta\right\|^{\frac{4-2\beta}{1-\beta}}_{H^{2}}+\frac{1}{4}\left\|\theta_{xxx}\right\|^{2}_{L^{2}}.
\end{split}
\end{equation*}
And
\[
\left|\text{I}_{2}\right| \leq \left\|\mathcal{H}(\partial_{xx})^{\beta}\theta_{x}\right\|_{L^{2}} \left\|\theta_{xx}\right\|^{2}_{L^{4}} \leq C\left\|\mathcal{H}(\partial_{xx})^{\beta}\theta_{x}\right\|_{L^{2}} \left\|\theta_{xx}\right\|^{\frac{3}{2}}_{L^{2}}\left\|\theta_{xxx}\right\|^{\frac{1}{2}}_{L^{2}} \leq C\left\|\theta\right\|^{4}_{H^{2}} +\frac{1}{4}\left\|\theta_{xxx}\right\|^{2}_{L^{2}}.
\]

Therefore, we obtain 
\eqn
\frac{d}{dt}\left\|\theta\right\|^{2}_{H^{2}}+ \left\|\theta_{x}\right\|^{2}_{H^{2}}\leq C\left\|\theta\right\|^{4}_{H^{2}} + C\left\|\theta\right\|^{\frac{4-2\beta}{1-\beta}}_{H^{2}}. 
\een
This implies that there exists $T=T(\|\theta_0\|_{H^2})$ such that there exists a unique solution of (\ref{singular model d}) in $C\left([0, T); H^2 (\Bbb R)\right)$.
\end{proof}

We may lower the regularity of the initial data to prove a local existence result of a weak solution by considering initial data in $\dot H^{\frac{1}{2}}$. The main tools to achieve this will be the use of the Hardy-BMO duality together with interpolation arguments. However, in order to simplify the computation, we consider an equivalent equation by changing the sign of the nonlinearity:
\begin{subequations} \label{singular model d}
\begin{align}
& \theta_{t} + \left(\mathcal{H}(-\partial_{xx})^{\beta}\theta\right)\theta_{x} +\Lambda^{\gamma}\theta=0, \\
& \theta(0,x)=\theta_{0}(x)
\end{align}
\end{subequations}
This can be obtained from (\ref{singular model d}) via $\theta \mapsto -\theta$. For this equation, we do $\dot H^{\frac{1}{2}}$ estimates and prove that there exists a local existence of a unique solution in that special case.

\begin{theorem} \label{GW}
Let $\gamma=2$ and $0<\beta<\frac{1}{2}$. For any  $\theta_0 \in \dot H^{\frac{1}{2}} (\Bbb R)$, there exists $T=T(\Vert \theta_{0} \Vert_{\dot H^{\frac{1}{2}}})$ such that there exists a unique local-in-time solution in $C([0, T); \dot H^{\frac{1}{2}} (\Bbb R)) \cap L^2\left([0, T); H^{\frac{3}{2}} (\Bbb R)\right)$.
\end{theorem}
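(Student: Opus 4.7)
The plan is to run an $\dot H^{1/2}$ energy estimate that closes via Hardy-BMO duality, and then construct the solution by a vanishing-viscosity regularization. Formally, testing the equation against $\Lambda\theta$ and using $\gamma=2$ together with $\Lambda=\mathcal{H}\partial_{x}$ gives
$$\frac{1}{2}\frac{d}{dt}\|\theta\|_{\dot H^{1/2}}^{2}+\|\theta\|_{\dot H^{3/2}}^{2}=-\int_{\mathbb{R}}\bigl(\mathcal{H}\Lambda^{2\beta}\theta\bigr)\,\theta_{x}\,\mathcal{H}\theta_{x}\,dx.$$
The structural point is that the preliminaries identity $2\mathcal{H}(f\mathcal{H}f)=(\mathcal{H}f)^{2}-f^{2}$ implies $\theta_{x}\mathcal{H}\theta_{x}\in\mathcal{H}^{1}$ with $\|\theta_{x}\mathcal{H}\theta_{x}\|_{\mathcal{H}^{1}}\leq\|\theta_{x}\|_{L^{2}}^{2}$.

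Pairing via the Hardy-BMO duality, using boundedness of $\mathcal{H}$ on $BMO$, and the one-dimensional embedding $\dot H^{1/2}\hookrightarrow BMO$, the nonlinear term is bounded by $C\|\theta\|_{\dot H^{1/2+2\beta}}\|\theta\|_{\dot H^{1}}^{2}$. Standard interpolation gives $\|\theta\|_{\dot H^{1/2+2\beta}}\leq\|\theta\|_{\dot H^{1/2}}^{1-2\beta}\|\theta\|_{\dot H^{3/2}}^{2\beta}$ and $\|\theta\|_{\dot H^{1}}^{2}\leq\|\theta\|_{\dot H^{1/2}}\|\theta\|_{\dot H^{3/2}}$, so the bound becomes $C\|\theta\|_{\dot H^{1/2}}^{2-2\beta}\|\theta\|_{\dot H^{3/2}}^{1+2\beta}$. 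Since $\beta<\frac{1}{2}$, the exponent $1+2\beta$ is strictly less than $2$, and Young's inequality absorbs half of the dissipation, leaving
$$\frac{d}{dt}\|\theta\|_{\dot H^{1/2}}^{2}+\|\theta\|_{\dot H^{3/2}}^{2}\leq C\,\|\theta\|_{\dot H^{1/2}}^{(4-4\beta)/(1-2\beta)}.$$
An ODE comparison provides a uniform bound in $L^{\infty}_{T}\dot H^{1/2}\cap L^{2}_{T}\dot H^{3/2}$ on a time interval $T=T(\|\theta_{0}\|_{\dot H^{1/2}})$.

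To make this rigorous, I would regularize as in the proof of Theorem \ref{main theorem} by adding $\epsilon\theta^{\epsilon}_{xx}$ and mollifying the initial data, reproduce the same bound uniformly in $\epsilon$, read off $\partial_{t}\theta^{\epsilon}\in L^{1}_{T}H^{-N}$ from the equation, and apply Lemma \ref{lem:2.2} (with a standard $x$-cutoff to bypass the lack of compact Sobolev embedding on $\mathbb{R}$) to obtain strong convergence in $L^{2}_{T}H^{s}_{\mathrm{loc}}$ for some $s<\tfrac{3}{2}$. This suffices to pass to the limit in $\mathcal{H}\Lambda^{2\beta}\theta^{\epsilon}\cdot\theta^{\epsilon}_{x}$. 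Uniqueness is obtained by an $L^{2}$ estimate on $\theta=\theta_{1}-\theta_{2}$: the two nonlinear contributions are controlled by $\|u_{1}\|_{L^{\infty}}$ and $\|\theta_{2,x}\|_{L^{\infty}}$, both finite in $L^{2}_{T}$ via the 1D embedding $\dot H^{3/2-2\beta}\hookrightarrow L^{\infty}$ valid for $\beta<\tfrac{1}{2}$, and Gronwall closes.

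The main obstacle is the tightness of the closing inequality: the restriction $\beta<\tfrac{1}{2}$ is forced simultaneously by the Young exponent (where $1+2\beta<2$ is needed to absorb the dissipation) and by the Sobolev embedding used for uniqueness, so every ingredient has to be sharp. The Hardy-BMO route is essential here; a naive H\"older pairing $L^{2}\cdot L^{\infty}\cdot L^{2}$ would require control of $\|\Lambda^{2\beta}\theta\|_{L^{\infty}}$, which is not available at this critical level of regularity.
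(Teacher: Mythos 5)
Your proposal is correct and follows essentially the same route as the paper: the identical $\dot H^{1/2}$ energy identity $-\int \theta_x\,\mathcal{H}\theta_x\,\mathcal{H}\Lambda^{2\beta}\theta\,dx$, the Hardy--BMO duality via the identity $2\mathcal{H}(f\mathcal{H}f)=(\mathcal{H}f)^2-f^2$ and the embedding $\dot H^{1/2}\hookrightarrow BMO$, the same two interpolation inequalities, and Young's inequality yielding the exponent $4(1-\beta)/(1-2\beta)$. The only difference is that you spell out the regularization, compactness, and uniqueness steps that the paper leaves implicit, which is a welcome addition rather than a different argument.
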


\begin{proof}
By recalling that $\Lambda^{2\beta}=(-\partial_{xx})^{\beta}$ we get
\begin{equation*}
\begin{split}
\frac{1}{2} \frac{d}{dt} \Vert \theta \Vert^{2}_{\dot H^{\frac{1}{2}}} +\left\Vert \Lambda^{\frac{1+\gamma}{2}} \theta \right\Vert^{2}_{L^{2}} &=- \int \Lambda^{\frac{1}{2}} \theta \Lambda^{\frac{1}{2}}\left\{\left(\mathcal{H}(-\partial_{xx})^{\beta}\theta\right)\theta_{x} \right\} dx  \\
&=- \int \theta_{x}\Lambda \theta \ \mathcal{H}(-\partial_{xx})^{\beta}\theta  dx=- \int \theta_{x} \mathcal{H} \theta_{x}  \mathcal{H}(-\partial_{xx})^{\beta}\theta dx.
\end{split}
\end{equation*}
We now use the $\mathcal{H}^1$-BMO duality to estimate the right hand side of the last equality. By using the estimate \eqref{hardy} and $\dot H^{\frac{1}{2}} \hookrightarrow BMO$, we obtain 
\[
\Vert \theta_{x} \mathcal{H} \theta_{x} \Vert_{\mathcal{H}^1} \leq \Vert \theta \Vert^{2}_{\dot H^{1}}, \quad \left\|\mathcal{H}(-\partial_{xx})^{\beta}\theta\right\|_{L^{2}}\leq C\Vert \theta \Vert_{\dot H^{2\beta +\frac{1}{2}}}
\]
and thus we have 
\[
\frac{1}{2} \frac{d}{dt} \Vert \theta \Vert^{2}_{\dot H^{\frac{1}{2}}} +\left\Vert \Lambda^{\frac{1+\gamma}{2}} \theta \right\Vert^{2}_{L^{2}} \leq  C\Vert  \theta \Vert^{2}_{\dot H^{1}}    \Vert \theta \Vert_{\dot H^{2\beta +\frac{1}{2}}}.
\]
By fixing $\gamma=2$ and by using the interpolation inequalities
\[
\Vert \theta \Vert^{2}_{\dot H^1} \leq \Vert \theta \Vert_{\dot H^{\frac{3}{2}}}\Vert \theta \Vert_{\dot H^{\frac{1}{2}}}, \quad \Vert \theta \Vert_{\dot H^{2\beta +\frac{1}{2}}} \leq \Vert \theta \Vert^{2\beta}_{\dot H^{\frac{3}{2}}} \Vert \theta \Vert^{1-2\beta}_{\dot H^{\frac{1}{2}}},
\]
where we use $\frac{1}{2}\leq 2\beta+\frac{1}{2} \leq \frac{3}{2}$ for $\beta \in \left(0,\frac{1}{2}\right)$ to get the second inequality. Hence, we obtain  
\begin{equation*}
\begin{split}
\frac{1}{2} \frac{d}{dt} \Vert \theta \Vert^{2}_{\dot H^{\frac{1}{2}}} +\left\Vert \Lambda^{\frac{3}{2}} \theta \right\Vert^{2}_{L^{2}} &\leq  \Vert  \theta \Vert^{2}_{\dot H^{1}}    \Vert \theta \Vert_{\dot H^{2\beta + \frac{1}{2}}} \\
& \leq  \Vert \theta \Vert^{1+2\beta}_{\dot H^{\frac{3}{2}}} \Vert \theta \Vert^{2-2\beta}_{\dot H^{\frac{1}{2}}} \leq \frac{1}{2}\Vert \theta \Vert^{2}_{\dot H^{\frac{3}{2}}} + 2\Vert \theta \Vert^{4\frac{1-\beta}{1-2\beta}}_{\dot H^{\frac{1}{2}}},
\end{split}
\end{equation*}
where we use the condition $\beta \in \left(0,\frac{1}{2}\right)$ again to derive the inequality. This implies local existence of a unique solution up to some time $T=T(\Vert \theta_{0} \Vert_{\dot H^{\frac{1}{2}}})$.
\end{proof}

\subsection{Global well-posedness}

We finally deal with (\ref{singular model d}) with $\gamma=2$. 

\begin{theorem} \label{GW}
Let $\gamma=2$ and $\beta<\frac{1}{4}$. For any  $\theta_0 \in H^2 (\Bbb R)$, there exists a unique global-in-time solution in $C\left([0, \infty); H^2 (\Bbb R)\right)$.
\end{theorem}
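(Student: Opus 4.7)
The plan is to combine the local $H^{2}$ existence from Theorem \ref{LW and blow-up criterion theorem 2} with a priori bounds on $\|\theta(t)\|_{H^{2}}$ valid on every finite interval $[0,T]$, which forces the maximal existence time to be infinite. Since $\gamma=2$ the equation reads $\theta_{t}=u\theta_{x}+\theta_{xx}$ with $u=\mathcal{H}(\partial_{xx})^{\beta}\theta$, so that $\|u\|_{L^{2}}$ and $\|u_{x}\|_{L^{2}}$ equal $\|\Lambda^{2\beta}\theta\|_{L^{2}}$ and $\|\Lambda^{2\beta+1}\theta\|_{L^{2}}$ (up to signs). I would build the bounds as a hierarchy $L^{\infty}\to L^{2}\to \dot H^{1}\to \dot H^{2}$.

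The pointwise maximum principle applied at interior extrema, where $\theta_{x}=0$ and $\theta_{xx}$ has the appropriate sign, gives $\|\theta(t)\|_{L^{\infty}}\le M:=\|\theta_{0}\|_{L^{\infty}}$ on the life-span. Testing the equation against $\theta$,
\[
\frac{d}{dt}\|\theta\|_{L^{2}}^{2}+2\|\theta_{x}\|_{L^{2}}^{2}=2\int u\theta\theta_{x}\,dx\le 2M\|u\|_{L^{2}}\|\theta_{x}\|_{L^{2}},
\]
and the interpolation $\|u\|_{L^{2}}\le \|\theta\|_{L^{2}}^{1-2\beta}\|\theta_{x}\|_{L^{2}}^{2\beta}$ combined with Young's inequality (valid because $1+2\beta<2$) reduces this to $\frac{d}{dt}\|\theta\|_{L^{2}}^{2}+\|\theta_{x}\|_{L^{2}}^{2}\le C(M)\|\theta\|_{L^{2}}^{2}$. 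Gr\"onwall then bounds $\|\theta(t)\|_{L^{2}}$ on $[0,T]$ and guarantees $\|\theta_{x}\|_{L^{2}}^{2}\in L^{1}_{T}$.

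The crux is the $\dot H^{1}$ estimate. Differentiating and integrating by parts,
\[
\frac{d}{dt}\|\theta_{x}\|_{L^{2}}^{2}+2\|\theta_{xx}\|_{L^{2}}^{2}=\int u_{x}\theta_{x}^{2}\,dx=-2\int u\theta_{x}\theta_{xx}\,dx,
\]
and Agmon's inequality $\|u\|_{L^{\infty}}^{2}\le 2\|u\|_{L^{2}}\|u_{x}\|_{L^{2}}$ together with the interpolation $\|\Lambda^{2\beta+1}\theta\|_{L^{2}}\le \|\theta_{x}\|_{L^{2}}^{1-2\beta}\|\theta_{xx}\|_{L^{2}}^{2\beta}$ yields $\|u\|_{L^{\infty}}\le C\|\theta\|_{L^{2}}^{(1-2\beta)/2}\|\theta_{x}\|_{L^{2}}^{1/2}\|\theta_{xx}\|_{L^{2}}^{\beta}$, whence
\[
\Big|2\int u\theta_{x}\theta_{xx}\,dx\Big|\le C\|\theta\|_{L^{2}}^{(1-2\beta)/2}\|\theta_{x}\|_{L^{2}}^{3/2}\|\theta_{xx}\|_{L^{2}}^{1+\beta}.
\]
A first Young inequality (needing only $\beta<1$) absorbs one factor of $\|\theta_{xx}\|_{L^{2}}^{2}$, producing $y'+\|\theta_{xx}\|_{L^{2}}^{2}\le C(T)y^{3/(2(1-\beta))}$ with $y=\|\theta_{x}\|_{L^{2}}^{2}$. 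The exponent on the right is superlinear and ordinary Gr\"onwall fails; however, the one-dimensional Cauchy--Schwarz inequality $\|\theta_{x}\|_{L^{2}}^{2}\le \|\theta\|_{L^{2}}\|\theta_{xx}\|_{L^{2}}$ supplies $\|\theta_{xx}\|_{L^{2}}^{2}\ge y^{2}/A$ with $A=\|\theta\|_{L^{2}}^{2}$, and the hypothesis $\beta<\tfrac14$ is precisely what makes $3/(2(1-\beta))<2$. A second Young then absorbs the right-hand side into $y^{2}/A$, leaving $y'\le C(T)$ and a uniform bound on $\|\theta_{x}(t)\|_{L^{2}}$ throughout $[0,T]$.

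With $\|\theta\|_{L^{\infty}}$ and $\|\theta\|_{H^{1}}$ under control, the $\dot H^{2}$ estimate closes by the same mechanism one derivative higher: the nonlinear contributions can each be bounded by a fractional power of $\|\theta_{xxx}\|_{L^{2}}$ with exponent strictly less than $2$ (absorbed by Young), and by a power of $\|\theta_{xx}\|_{L^{2}}$ whose square is controlled by the Cauchy--Schwarz lower bound $\|\theta_{xxx}\|_{L^{2}}^{2}\ge \|\theta_{xx}\|_{L^{2}}^{4}/\|\theta_{x}\|_{L^{2}}^{2}$ combined with another Young absorption (valid because $1+\beta<2$). This yields $\|\theta_{xx}(t)\|_{L^{2}}$ bounded on $[0,T]$ and, together with the local existence and uniqueness of Theorem \ref{LW and blow-up criterion theorem 2}, produces the unique global solution in $C([0,\infty);H^{2}(\mathbb{R}))$. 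The main obstacle is the third paragraph: the threshold $\beta<\tfrac14$ is sharp because it is precisely the scaling-critical exponent at which $L^{2}$ becomes critical for the problem, which is in turn exactly what makes the drift nonlinearity subcritical relative to the Cauchy--Schwarz-based lower bound on the dissipation.
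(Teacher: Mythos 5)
Your proposal is correct and follows the same overall skeleton as the paper (maximum principle, then $L^{2}$, then $\dot H^{1}$, then continuation to all time), but the two crucial steps are executed by genuinely different mechanisms. For the $\dot H^{1}$ estimate the paper bounds $\|u\|_{L^{\infty}}^{2}\le C(\|\theta\|_{L^{2}}^{2}+\|\theta_{x}\|_{L^{2}}^{2})$ directly by Sobolev embedding (this is where $\beta<\tfrac14$ enters there), arrives at $\frac{d}{dt}\|\theta_{x}\|_{L^{2}}^{2}\le C(\|\theta\|_{L^{2}}^{2}+\|\theta_{x}\|_{L^{2}}^{2})\|\theta_{x}\|_{L^{2}}^{2}$, and closes by a \emph{linear} Gr\"onwall argument using the time-integrability $\int_{0}^{t}\|\theta_{x}\|_{L^{2}}^{2}\,ds<\infty$ inherited from the $L^{2}$ level; you instead keep fractional powers of $\|\theta_{xx}\|_{L^{2}}$ via Agmon plus interpolation and close a \emph{nonlinear} differential inequality by absorbing $y^{3/(2(1-\beta))}$ into the dissipation through $\|\theta_{xx}\|_{L^{2}}^{2}\ge\|\theta_{x}\|_{L^{2}}^{4}/\|\theta\|_{L^{2}}^{2}$, which is where $\beta<\tfrac14$ enters for you (I checked: $3/(2(1-\beta))<2\iff\beta<\tfrac14$, and your exponent bookkeeping in the trilinear term is right). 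Your route does not need the integrated dissipation from the $L^{2}$ step and makes the role of the threshold more transparent (your scaling remark that $\beta=\tfrac14$ is exactly $L^{2}$-critical is correct); the paper's route is shorter and avoids the Nash-type lower bound. For the conclusion, the paper invokes the blow-up criterion \eqref{blow-up criterion 1} by showing $\int_{0}^{t}(\|\theta_{x}\|_{L^{\infty}}+\|u_{x}\|_{L^{\infty}})\,ds\le C\int_{0}^{t}(\|\theta_{x}\|_{L^{2}}+\|\theta_{xx}\|_{L^{2}})\,ds<\infty$ (strictly speaking, Theorem \ref{LW and blow-up criterion theorem} is stated only for $\gamma<2$, so the paper is extrapolating it to $\gamma=2$), whereas you perform a direct $\dot H^{2}$ energy estimate and continue via Theorem \ref{LW and blow-up criterion theorem 2}; your $\dot H^{2}$ step is only sketched, but the mechanism does close (the worst terms produce powers of $\|\theta_{xx}\|_{L^{2}}$ strictly below $4$ after the first Young absorption, which the lower bound $\|\theta_{xxx}\|_{L^{2}}^{2}\ge\|\theta_{xx}\|_{L^{2}}^{4}/\|\theta_{x}\|_{L^{2}}^{2}$ handles), and it has the advantage of resting only on a theorem actually proved at $\gamma=2$.
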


\begin{proof}
By Theorem \ref{LW and blow-up criterion theorem}, we only need to control the quantities in (\ref{blow-up criterion 1}). Let  $u=-\mathcal{H}(\partial_{xx})^{\beta}\theta$. We first note that (\ref{singular model d}) satisfies the maximum principle and so 
\[
\left\|\theta(t)\right\|_{L^{\infty}}\leq \left\|\theta_{0}\right\|_{L^{\infty}}\leq C \|\theta_{0}\|_{H^{2}}.
\]

We take the $L^2$ inner product of (\ref{singular model d}) with $\theta$. Then, 
\eqn
\frac{1}{2}\frac{d}{dt}\|\theta\|^{2}_{L^{2}} +\|\theta_{x}\|^{2}_{L^{2}}=-\int u\theta_{x}\theta dx\leq \|\theta_{0}\|_{L^{\infty}} \|u\|_{L^{2}}\|\theta_{x}\|_{L^{2}}.
\een
Since 
\[
 \|u\|_{L^{2}} \leq C\|\theta\|^{1-2\beta}_{L^{2}} \|\theta_{x}\|^{2\beta}_{L^{2}} \quad \text{for $\beta< \frac{1}{2}$},
\]
we have
\eqn \label{theta L2 bound}
\|\theta(t)\|^{2}_{L^{2}} +\int^{t}_{0}\|\theta_{x}(s)\|^{2}_{L^{2}}ds\leq C\left(t, \|\theta_{0}\|_{H^{2}}\right). 
\een

\noindent We next take $\partial_x $ to (\ref{singular model d}), take its $L^2$ inner product with $\theta_x$, and integrate by parts to obtain
\begin{equation*}
\begin{split}
\frac12 \frac{d}{dt}\|\theta_x\|_{L^2} ^2 + \left\|\theta_{xx}\right\|_{L^2}^2 =\int u \theta_x \theta_{xx}dx  \leq 2\|u\|^{2}_{L^{\infty}}\|\theta_{x}\|^{2}_{L^{2}}+\frac{1}{2}\left\|\theta_{xx}\right\|_{L^2}^2.
\end{split}
\end{equation*}
Since 
\[
\|u\|^{2}_{L^{\infty}} \leq C\|\theta\|^{2}_{L^{2}}+C\|\theta_{x}\|^{2}_{L^{2}} \quad \text{when $\beta<\frac{1}{4}$},
\]
we obtain  
\eqn \label{theta H1 bound}
\|\theta_{x}(t)\|^{2}_{L^{2}} +\int^{t}_{0}\|\theta_{xx}(s)\|^{2}_{L^{2}}ds\leq C\left(t, \|\theta_{0}\|_{L^{1}}, \|\theta_{0}\|_{H^{2}}\right) \quad \text{when $\beta<\frac{1}{4}$.}
\een
By (\ref{theta L2 bound}) and (\ref{theta H1 bound}), we finally obtain 
\[
\int^{t}_{0}\left(\|\theta_{x}(s)\|_{L^{\infty}}+ \|u_{x}(s)\|_{L^{\infty}}\right)ds \leq C\int^{t}_{0}\left(\|\theta_{x}(s)\|_{L^{2}}+ \|\theta_{xx}(s)\|_{L^{2}}\right)ds \leq C\left(t, \|\theta_{0}\|_{L^{1}}, \|\theta_{0}\|_{H^{2}}\right)
\]
and so we complete the proof of Theorem \ref{GW}.
\end{proof}

\section{Appendix}
This appendix is briefly written based on \cite{Bahouri}. We first provide notation and definitions in the Littlewood-Paley theory.  Let $\mathcal{C}$ be the ring of center 0, of small radius $\frac{3}{4}$ and great radius $\frac{8}{3}$. We take smooth radial functions $(\chi, \phi)$ with values in $[0,1]$  that are supported on the ball $B_{\frac{4}{3}}(0)$ and $\mathcal{C}$, respectively, and satisfy
\begin{equation}
\begin{split}
& \chi(\xi)+ \sum^{\infty}_{j=0}\phi\left(2^{-j}\xi\right)=1 \ \  \forall \ \xi \in \mathbb{R}^{d},\\
&  \sum^{\infty}_{j=-\infty}\phi\left(2^{-j}\xi\right)=1 \ \  \forall \ \xi \in \mathbb{R}^{d}\setminus\{0\},\\
& \left|j-j^{'}\right|\ge 2  \ \Longrightarrow \ \text{supp}\ \phi\left(2^{-j}\cdot\right)\bigcap \text{supp}\ \phi\left(2^{-j^{'}}\cdot\right)=\emptyset,\\
& j\ge 1  \ \Longrightarrow \ \text{supp}\ \chi \bigcap \text{supp}\ \phi\left(2^{-j}\cdot\right)=\emptyset.
\end{split}
\end{equation}
From now on, we use the notation
\[
\phi_{j}(\xi)=\phi\left(2^{-j}\xi\right).
\]
We define dyadic blocks and lower frequency cut-off functions. 
\begin{equation}
\begin{split}
& h=\mathcal{F}^{-1}\phi, \quad \widetilde{h}=\mathcal{F}^{-1}\chi,\\
& \Delta_{j}f=\phi_{j}\left(D\right)f=2^{jd} \int_{\mathbb{R}^{d}} h\left(2^{j}y\right)f(x-y)dy,\\
& S_{j}f=\chi\left(2^{-j}D\right)f=2^{jd} \int_{\mathbb{R}^{d}} \widetilde{h}\left(2^{j}y\right)f(x-y)dy,\\
& \Delta_{-1}f=\chi\left(D\right)f=\int_{\mathbb{R}^{d}} \widetilde{h}\left(y\right)f(x-y)dy.
\end{split}
\end{equation}
Then, the homogeneous Littlewood-Paley decomposition is given by
\[
f=\sum_{j\in \mathbb{Z}} \Delta_{j}f \ \  \text{in} \ \  \mathcal{S}^{'}_{h},
\]
where $\mathcal{S}^{'}_{h}$ is the space of tempered distributions $u\in \mathcal{S}^{'}$ such that 
\[
\lim_{j\rightarrow -\infty}S_{j}u=0\quad \text{in $\mathcal{S}'$}.
\]

We now define the homogeneous Besov spaces:
\[
\dot{B}^{s}_{p,q} =\left\{f\in \mathcal{S}^{'}_{h}: \ \left\|f\right\|_{\dot{B}^{s}_{p,q}}=\left\|\left\|2^{js}\left\|\Delta_{j}f\right\|_{L^{p}}\right\|_{l^{q}(\mathbb{Z})} \right\|<\infty \right\}.
\]

We also recall Bernstein's inequality in 1D : for $1\leq p\leq q \leq \infty$ and $k\in \mathbb{N}$,
\eqn\label{bernstein}
\sup_{|\alpha|=k} \left\|\partial^{\alpha}\Delta_{j}f \right\|_{L^{p}} \leq C 2^{jk} \left\|\Delta_{j}f \right\|_{L^{p}}, \quad \left\|\Delta_{j}f \right\|_{L^{q}} \leq C 2^{j\left(\frac{1}{p}-\frac{1}{q}\right)} \left\|\Delta_{j}f \right\|_{L^{p}}.
\een

\section*{Acknowledgments}
H.B. was supported by the National Research Foundation of Korea (NRF-2015R1D1A1A01058892). 

R.G.B. was supported by the LABEX MILYON (ANR-10-LABX-0070) of Universit\'e de Lyon, within the program ``Investissements d'Avenir'' (ANR-11-IDEX-0007) operated by the French National Research Agency (ANR), and by the Universidad de Cantabria. 

O.L. was partially supported by the Marie-Curie Grant, acronym: TRANSIC, from the FP7-IEF program and  by the ERC through the Starting Grant project H2020-EU.1.1.-63922.

Both O. L. and R.G.B. were partially supported by the Grant MTM2014-59488-P from the former Ministerio de Econom\'ia y Competitividad (MINECO, Spain).


\end{document}